\newtheorem{theorem}{Theorem}[section]
\newtheorem{proposition}[theorem]{Proposition}
\newtheorem{lemma}[theorem]{Lemma}
\theoremstyle{remark}
\newtheorem{example}{Example}
\newtheorem{remark}{Remark}
\newtheorem*{notation}{Notation}
\newcommand{\vep}{\varepsilon}
\newcommand{\R}{{\mathbb{R}}}
\newcommand{\C}{{\mathbb{C}}}
\newcommand{\Z}{{\mathbb{Z}}}
\newcommand{\T}{{\mathbb{T}}}
\let\oldmarginpar\marginpar
\renewcommand\marginpar[1]{\-\oldmarginpar[\raggedleft\footnotesize #1]%
{\raggedright\footnotesize #1}}
\begin{document}
\title[Light rays in arrays of retro-reflector lenses]
{Directional localization of light rays in a periodic array of retro-reflector lenses}
\author[K. Fr\k{a}czek \and M. Schmoll]{Krzysztof Fr\k{a}czek \and Martin Schmoll}

\address{Faculty of Mathematics and Computer Science\\ Nicolaus
Copernicus University\\ ul. Chopina 12/18\\ 87-100 Toru\'n, Poland}
\email{fraczek@mat.umk.pl}
\address{Faculty of Mathematics\\
O-017 Martin Hall\\
Clemson University\\
Clemson, SC 29634 } \email{schmoll@clemson.edu}
\date{\today}

\subjclass[2000]{ 37A40, 37A60, 37C40}  \keywords{}
\thanks{Research partially supported by the Narodowe Centrum Nauki Grant
2011/03/B/ST1/00407.} \maketitle
\begin{abstract}
We show that vertical light rays in almost every periodic array of Eaton lenses do not leave certain  strips of bounded width. The light rays are traced by
leaves of a non-orientable foliation on a singular plane. We study the flow defined by the induced foliation on  the orientation cover of the singular
plane. The behavior of that flow and ultimately our claim for the light rays is based on an analysis of the Teichm\"uller flow and the Kontsevich-Zorich
cocycle on the moduli space of two branched, two sheeted torus covers in genus two.
\end{abstract}

\section{Introduction}\label{sec:intro}
Trajectories of light can be controlled by systems of mirrors (like in billiard models) and also by changing the refractive index (RI) of a lens. In this
note we will deal with the so called Eaton lens. This is a  retroreflector lens that reflects rays of light back to their sources. More precisely, the Eaton
lens is a round lens (of radius, say $R>0$) where the RI varies from $1$ to infinity  and it is given by $RI=\sqrt{\frac{2R}{r}-1}$ in polar coordinates.
We assume that the refraction index outside the lens is equal to $1$. The RI is not defined at the center of the lens and goes to infinity when approaching
this singular point. The  direction of the light motion is reversed (cf. \cite{Ha-Ha}) after passing through the lens, see Figure~\ref{Eaton_lens}.
\begin{figure}[h]
\includegraphics[width=0.7\textwidth]{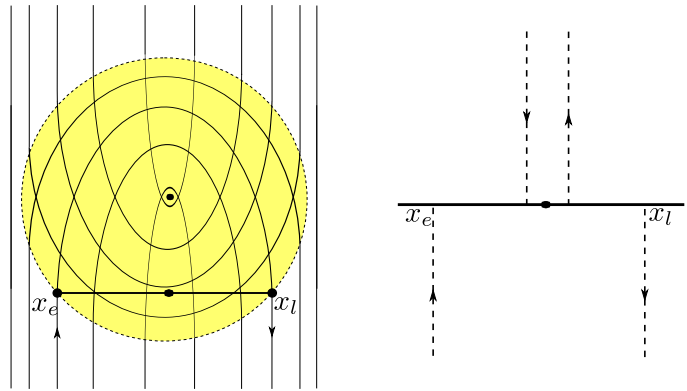}
\caption{Eaton lens and its flat counterpart\label{Eaton_lens}}
\end{figure}
Next let us consider a system of identical Eaton lenses (of
radius $R>0$) that are  arranged on the plane  $\R^2$ so that the
centers  are placed at the points of a lattice
$\Lambda\subset\R^2$. We say that a lattice $\Lambda$ is
$R$-admissible if the circles of radius $R$ centered at the
lattice points of $\Lambda$ are pairwise disjoint.
We will denote such admissible system of Eaton lenses by $L(\Lambda,R)$, see Figure~\ref{lattice_eaton}. Our purpose is to study the behavior of light
orbits for such periodic system of lenses for different pairs of parameters $\Lambda$, $R$. First note that after a rotation we can assume that light rays
running in the same direction are vertical and after rescaling we can assume that $\Lambda$ is unimodular. Denote by $\mathscr{L}$ the space of unimodular
lattices on $\R^2$ that can be identified with the moduli space $SL(2,\R)/SL(2,\Z)$.  Let us consider the natural action of $SL(2,\R)$ given by the left
multiplication and denote by $\mu_{\mathscr{L}}$ the unique probability invariant measure on $\mathscr{L}$. For every $0<R<1/\sqrt{2\sqrt{3}}$ the set of
unimodular $R$-admissible lattices is an open (non-empty, because the hexagonal unimodular lattice is $R$-admissible) subset of $\mathscr{L}$ and hence has
positive measure. For $R\geq 1/\sqrt{2\sqrt{3}}$ the set of unimodular $R$-admissible lattices is empty, compare with the optimal circle packing problem.
The main result of this note is the following.
\begin{theorem}\label{thm:main}
For every $0<R<1/\sqrt{2\sqrt{3}}$  and for a.e.\ $R$-admissible lattice
$\Lambda\in \mathscr{L}$ there exist constants $C=C(\Lambda,R)>0$
and $\theta =\theta(\Lambda,R) \in S^1$, such that every vertical
light ray in $L(\Lambda,R)$ is trapped in an infinite band of
width $C>0$ in direction $\theta$.
%
\end{theorem}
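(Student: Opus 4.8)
The plan is to turn the light rays into leaves of a foliation on a flat torus, pass to the orientation cover to obtain a family of genus-two translation surfaces, reduce the trapping statement to the boundedness of suitable ergodic integrals, and then read that boundedness off the Lyapunov spectrum of the Kontsevich--Zorich cocycle over the Teichm\"uller flow on the relevant locus of abelian differentials. Concretely, I would first set up the flat model indicated in the abstract and in Figure~\ref{Eaton_lens}: away from the lenses light follows vertical lines, while crossing the Eaton lens centred at $c$ amounts, by the retro-reflecting property, to identifying the ray meeting $\partial B(c,R)$ at a point with the ray leaving at the antipodal point in the reversed direction. Dividing $\R^2$ by $\Lambda$ this produces on the flat torus $T_\Lambda=\R^2/\Lambda$ a foliation $\mathcal F_\Lambda$ whose leaves are the projected vertical rays of $L(\Lambda,R)$, and the interchange of ``up'' and ``down'' at every lens makes $\mathcal F_\Lambda$ non-orientable. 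On the orientation double cover $p_\Lambda\colon\widehat T_\Lambda\to T_\Lambda$ the foliation becomes orientable, and the flat structure making it vertical turns $\widehat T_\Lambda$ into a translation surface $(\widehat T_\Lambda,\omega_\Lambda)$ of genus two in $\mathcal H(1,1)$; the deck involution $\sigma$ satisfies $\sigma^*\omega_\Lambda=-\omega_\Lambda$, a second commuting involution exhibits $(\widehat T_\Lambda,\omega_\Lambda)$ as another two-sheeted branched torus cover, and all of these surfaces lie in the $\SL(2,\R)$-invariant locus $\mathcal E\subset\mathcal H(1,1)$ of such double torus covers --- which, since all unimodular lattices form a single $\SL(2,\R)$-orbit, contains the $\SL(2,\R)$-orbit closure $\mathcal M_R$ of $\widehat T_{\Z^2,R}$ for every admissible $R$.

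Two structural facts then drive everything. First, the $\sigma$-invariant splitting $H^1(\widehat T_\Lambda;\R)=H^1_+\oplus H^1_-$ is preserved by $\SL(2,\R)$ and by the Kontsevich--Zorich cocycle, with $H^1_-$ the tautological symplectic plane spanned by $[\operatorname{Re}\omega_\Lambda]$ and $[\operatorname{Im}\omega_\Lambda]$, and $H^1_+=p_\Lambda^*H^1(T_\Lambda)$ the symplectic plane spanned by the pullbacks of the coordinate one-forms $dx,dy$ of $T_\Lambda$. Second, a vertical light ray corresponds to an orbit $\gamma$ of the vertical flow of $(\widehat T_\Lambda,\omega_\Lambda)$, and its position in $\R^2$ after arc length $T$ equals, up to an error bounded by $2R$ coming from the lens interiors, the pair of ergodic integrals $\int_{\gamma_{[0,T]}}p_\Lambda^*dx$ and $\int_{\gamma_{[0,T]}}p_\Lambda^*dy$ of these $\sigma$-invariant forms along the orbit.

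With this dictionary, ``trapped in an infinite band of width $C$ in direction $\theta$'' becomes a statement about the growth of these ergodic integrals. The average of $p_\Lambda^*dx$ (and of $p_\Lambda^*dy$) along the vertical flow is the pairing of its class, which lies in $H^1_+$, with the asymptotic cycle of the flow; this asymptotic cycle is $\sigma$-anti-invariant and therefore annihilates $H^1_+$, so the physical position has no linear drift, and by the Zorich--Forni theory of deviation of ergodic averages its growth is governed by the restriction of the Kontsevich--Zorich cocycle to $H^1_+$. That restriction is a two-dimensional symplectic cocycle over an ergodic base, with Lyapunov exponents $\{\lambda,-\lambda\}$ and $0\le\lambda<1$ (the value $1$ excluded since the tautological plane sits in $H^1_-$, not in $H^1_+$). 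If $\lambda>0$, then along almost every forward Teichm\"uller geodesic the integral of a closed one-form whose class lies in the Oseledets line $E^{-\lambda}_+\subset H^1_+$ stays bounded, while that of a class in $E^{\lambda}_+$ grows like $T^\lambda$; in the linear coordinates on $\R^2$ adapted to the splitting $E^{\lambda}_+\oplus E^{-\lambda}_+$ one coordinate of the ray is then bounded and the other grows at most like $T^\lambda$, so the ray is confined to an infinite band whose direction $\theta=\theta(\Lambda,R)$ is the one determined by this Oseledets line and whose width $C=C(\Lambda,R)$ is the diameter of the bounded coordinate's excursion. (In the borderline case $\lambda=0$ one would instead need the $H^1_+$-cocycle to be bounded, which forces the orbit itself to be bounded; the appearance of a nontrivial $\theta(\Lambda,R)$ in the statement suggests $\lambda>0$.)

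The real obstacle, and the heart of the matter, is the analysis of the Kontsevich--Zorich cocycle on $\mathcal E$: proving that the exponent $\lambda$ of $H^1_+$ satisfies $0<\lambda<1$, and that for almost every starting datum the Oseledets splitting and the bounds behind the Zorich--Forni estimates are in force. Here I would exploit the very special geometry of the double branched torus covers --- the two commuting involutions, the resulting block structure of the cocycle, and the way the two flat structures carried by $\widehat T_\Lambda$ shear relative to each other along the Teichm\"uller geodesic --- together with ergodicity of the Teichm\"uller flow on the relevant component(s) of $\mathcal E$ and a Forni-type non-degeneracy criterion to rule out $\lambda=0$. Once this is established, a Fubini argument using that $\Lambda\mapsto(\widehat T_\Lambda,\omega_\Lambda)$ intertwines the $\SL(2,\R)$-action on unimodular lattices with the one on $\mathcal E$ promotes ``almost every vertical direction on a fixed surface of $\mathcal E$'' to ``almost every $R$-admissible lattice''; the measure-zero families of rays hitting a lens centre (singular leaves reaching a cone point of $\widehat T_\Lambda$) and of non-minimal directions are handled directly; and Theorem~\ref{thm:main} follows. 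I expect essentially all the difficulty to be concentrated in controlling the $H^1_+$-part of the cocycle --- establishing $\lambda>0$ together with the accompanying boundedness that upgrades sub-linear growth to an honest finite width --- since this is precisely the step where the nontrivial dynamics on the moduli space of torus covers enters.
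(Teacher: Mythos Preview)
Your outline is correct and matches the paper's strategy closely: pass to the flat slit model, build the genus-two orientation cover in $\mathcal{H}(1,1)$, identify the relevant two-dimensional symplectic subbundle of the Hodge bundle, invoke Zorich's bounded-deviation phenomenon on the stable Oseledets line of that subbundle, and then use a Fubini/local-product argument to transfer the a.e.\ statement from the natural measure on the locus of double torus covers to the Haar measure on $\mathscr{L}$. One organisational difference is that you take the involution $\sigma$ with $\sigma^*\omega=-\omega$ (the deck map of the orientation cover over $T_\Lambda$) as primary and work with its $\pm$-splitting, whereas the paper works with the commuting involution $\tau$ with $\tau^*\omega=\omega$ (the deck map over the standard torus carrying the abelian differential) and its $\pm$-splitting; since the hyperelliptic involution $\tau\sigma$ acts as $-\operatorname{id}$ on $H_1$, your $H^1_+$ is exactly the dual of the paper's $V=\ker p_*$, so the two viewpoints select the same subbundle.

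The substantive difference is at the point you correctly flag as the crux. You propose to prove $0<\lambda<1$ on $H^1_+$ via a Forni-type non-degeneracy criterion; the paper instead simply quotes Bainbridge's computation that the Lyapunov spectrum on $\mathcal{M}_2(1,1)$ for any $SL(2,\R)$-ergodic measure is $\{1,\tfrac12,-\tfrac12,-1\}$, whence the restricted cocycle on $V$ has exponents $\pm\tfrac12$ and the stable line $E^-_\omega$ is one-dimensional. Your route would work in principle, but citing Bainbridge removes the need for any new spectral argument, which is why the paper treats this step as a black box rather than the heart of the matter.
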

\begin{figure}[h]
\includegraphics[width=0.9\textwidth]{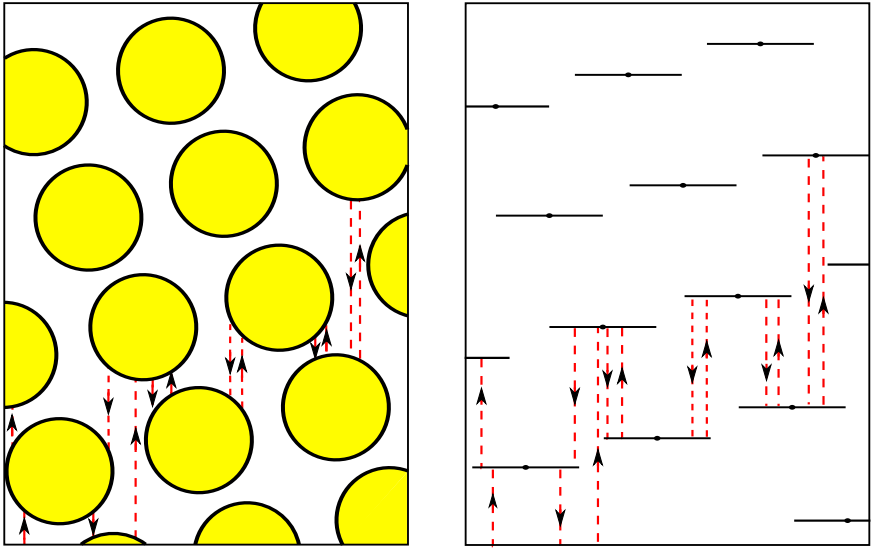}
\caption{$L(\Lambda,R)$ and $F(\Lambda,R)$\label{lattice_eaton}}
\end{figure}

Since we care only about the knowledge about orbits (not on the dynamics) of light rays, we can pass to a simpler model where round lenses are replaced by
flat counterparts, i.e.\ vertical intervals of length $2R$ (called slits),  see Figures~\ref{Eaton_lens}~and~\ref{lattice_eaton}. Such system of flat
horizontal ``lenses'' of length $2R$ whose centers are placed at the points of a lattice $\Lambda$ will be denoted by $F(\Lambda,R)$. The vertical light
rays flow on $F(\Lambda,R)$ by vertical translation with unit speed until hitting the interior of a slit. Then a light ray in $F(\Lambda,R)$ is rotated by
$\pi$ around the center of the slit and runs vertically in the opposite direction until next impact, see the right part of Figure~\ref{Eaton_lens}.

A vertical light ray in $L(\Lambda,R)$ entering to an Eaton lens at a
point $x_e$ leaves the lens at $x_l$ so that the light is rotated by $\pi$ around the center of the interval $[x_e,x_l]$ which is a horizontal chord
of the lens, see the left part of Figure~\ref{Eaton_lens}. There is one exception to this rule when a light ray approaches the center of the lens. Then the light ray does not leave the lens.
We adopt the convention that such a light ray turns back at the center of the lens.
Under this convention for every light orbit in $L(\Lambda,R)$ there is a corresponding orbit in $F(\Lambda,R)$ such that both orbits coincide outside the
lattice of circles. Since inside the circles the distance between these orbits is bounded by $2R$, the distance between the whole corresponding light
orbits in $L(\Lambda,R)$ and $F(\Lambda,R)$ is bounded by $2R$, as well.

In fact, we will deal with systems $F(\Lambda,R)$ for $R>0$ and $\Lambda\in\mathscr{L}$ such that
\begin{equation}\label{cond:adm}
\text{the slits in $F(\Lambda,R)$ are pairwise disjoint.}
\end{equation}
Of course, $R$-admissibility of $\Lambda$ implies this condition.

The problem of understanding the behavior of vertical light rays in $F(\Lambda,R)$ is reduced  in Section~\ref{sec:red} to the study of the vertical flow
on a translation surface $\widetilde{M}(\Lambda,R)$ which is a $\Z^2$-cover of a compact translation surface $M(\Lambda,R)$ the union of two slit tori. The
passage to the framework of translation surfaces allows us to exploit a powerful approach related  to Teichm\"uller dynamics and Lyapunov exponents of the so
called Kontsevich-Zorich cocycle. Exploiting the phenomenon of bounded deviation discovered by Zorich in \cite{Zor1,Zor2} we prove the following.
\begin{theorem}\label{thm:mainflat}
For every $R>0$ and for a.e.\ lattice $\Lambda\in \mathscr{L}$
there exist constants $C=C(\Lambda,R)>0$ and $\theta
=\theta(\Lambda,R) \in S^1$, such that every vertical light ray in
$F(\Lambda,R)$ is trapped in an infinite band of width $C>0$ in
direction $\theta$.
\end{theorem}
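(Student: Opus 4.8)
The plan is to translate the conclusion into a boundedness statement for a cohomological cocycle and then invoke Zorich's bounded-deviation phenomenon \cite{Zor1,Zor2}. By the reduction of Section~\ref{sec:red}, the vertical light rays in $F(\Lambda,R)$ are the leaves of the vertical foliation of a $\Z^2$-cover $p\colon\widetilde M(\Lambda,R)\to M(\Lambda,R)$, where $M(\Lambda,R)$ is a genus-two translation surface (the union of two slit tori) which is a two-sheeted cover of a flat torus branched over two points; the deck group $\Z^2$ of $p$ is encoded by two integral classes $c_1,c_2\in H^1(M(\Lambda,R);\Z)$, whose span $V$ is a two-dimensional, symplectic, monodromy-invariant (hence Kontsevich-Zorich invariant) sub-bundle of the cohomology bundle over the locus $\mathcal B$ of all such branched torus covers. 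For $x\in M(\Lambda,R)$ the displacement in the cover of the vertical trajectory through a lift of $x$, after time $T$, equals $\big(\langle c_1,[\gamma_T]\rangle,\langle c_2,[\gamma_T]\rangle\big)$ up to a bounded error, where $\gamma_T$ is the trajectory segment closed up by a bounded arc and $\langle\cdot,\cdot\rangle$ is the intersection form; hence the light ray through $x$ lies in an infinite band of finite width in a direction $\theta$ precisely when $\ell\big(\langle c_1,[\gamma_T]\rangle,\langle c_2,[\gamma_T]\rangle\big)$ stays bounded along the trajectory for some nonzero functional $\ell$ with $\R\theta=\ker\ell$.

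First I would invoke the ergodic theory of the Teichm\"uller flow on $\mathcal B$: identify, for a.e.\ $\Lambda$, the $SL(2,\R)$-orbit closure of $M(\Lambda,R)$ together with its affine invariant measure $\mu_{\mathcal B}$, and check that the push-forward of $\mu_{\mathscr L}$ under $\Lambda\mapsto M(\Lambda,R)$ is absolutely continuous with respect to $\mu_{\mathcal B}$ --- this is what turns ``a.e.\ $\Lambda$'' into ``$\mu_{\mathcal B}$-a.e.\ surface'', where Oseledets' multiplicative ergodic theorem and Zorich's deviation estimates apply to the restriction of the Kontsevich-Zorich cocycle to $V$. At such a surface $V$ splits as $E^+\oplus E^-$ with Lyapunov exponents $\lambda\ge 0$ and $-\lambda$; the subspace $E^+$ is a line when $\lambda>0$ and equals $V$ when $\lambda=0$.

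Now I would conclude as follows. Choose $c_\theta=\ell_1c_1+\ell_2c_2$ to be a nonzero vector of $E^+$, which is possible since $c_1,c_2$ span $V$. Decomposing $[\gamma_T]$ first along the symplectic splitting $H^1=V\oplus V^{\perp}$ and then along $V=E^+\oplus E^-$: the $V^\perp$-component is annihilated by $c_\theta$ by symplectic orthogonality; the $E^-$-component is $O(T^{-\lambda+\varepsilon})=O(1)$ by Oseledets regularity; and the $E^+$-component pairs to zero with $c_\theta$ when $E^+$ is a line, while if $E^+=V$ it is $o(T^\varepsilon)$, which Zorich's theorem upgrades to $O(1)$. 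In every case $\langle c_\theta,[\gamma_T]\rangle=O(1)$ uniformly in $x$ and $T$. Taking $\ell=(\ell_1,\ell_2)$, letting $\theta\in S^1$ be the direction with $\R\theta=\ker\ell$ and $C$ the resulting bound, we get that every vertical light ray in $F(\Lambda,R)$ is trapped in an infinite band of width $C$ in direction $\theta$. The main obstacle is the second step: pinning down the orbit closure $\mathcal B$, its invariant measure, and the structure of the Kontsevich-Zorich cocycle on $V$ precisely enough to run the machinery of Oseledets and Zorich --- in particular handling the non-uniformly hyperbolic block $V$ and the first-return cocycle; once $V$ is known to be a symplectic flat sub-bundle with polynomial deviations, the rest is soft.
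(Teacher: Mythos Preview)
Your overall plan is the same as the paper's, but the execution has the stable/unstable spaces reversed at the key step, and this is fatal. You choose $c_\theta\in E^+$ and claim that ``the $E^-$-component [of $[\gamma_T]$] is $O(T^{-\lambda+\varepsilon})$ by Oseledets regularity''. Oseledets' theorem controls how the cocycle $G^{KZ}_t$ stretches a \emph{fixed} vector along Teichm\"uller time $t$; it says nothing directly about the growth of the homology class $[\gamma_T]$ in the vertical-flow time $T$. The bridge between the two is precisely Zorich's bounded-deviation theorem, and that theorem bounds $\langle[\gamma_T],\xi\rangle$ only for $\xi$ in the \emph{stable} space $E^-$. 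Concretely, in the two-dimensional symplectic block $V=E^+\oplus E^-$ with basis $e_+,e_-$, the pairing $\langle[\gamma_T],e_-\rangle$ is (up to a constant) the $E^+$-coefficient of $[\gamma_T]$, and Zorich bounds \emph{that}; whereas $\langle[\gamma_T],e_+\rangle$ equals the $E^-$-coefficient, which in fact grows like $T^{\lambda}$ (here $\lambda=1/2$ by Bainbridge), so $\langle c_\theta,[\gamma_T]\rangle\sim T^{1/2}$ is unbounded. The paper therefore takes $\xi\in E^-$ and invokes Theorem~\ref{thm:existbound} directly; the isotropy trick you use is unnecessary once the right space is chosen. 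Relatedly, the case $\lambda=0$ is not handled by bounded deviation for the stable space and should be excluded by citing Bainbridge, as the paper does.

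There is a second genuine gap in the passage from ``$\nu_{\mathcal M}$-a.e.\ surface'' to ``$\mu_{\mathscr L}$-a.e.\ $\Lambda$''. The map $\Lambda\mapsto M(\Lambda,R)$ sends the $3$-dimensional space $\mathscr L$ into the $5$-dimensional locus $\mathcal M$ (the slit has fixed length $2R$ and is horizontal), so the pushforward of $\mu_{\mathscr L}$ is \emph{singular} with respect to $\nu_{\mathcal M}$, not absolutely continuous. The paper repairs this with the local product map $\Upsilon(\Lambda,t,s)=(h_sg_t)\cdot M(\Lambda,R)$, which is a local diffeomorphism $\mathscr L\times\R^2\to\mathcal M$, together with the observation \eqref{eq:changesigma} that the boundedness condition $|\langle\sigma^\omega_T(x),\xi\rangle|\le C$ is invariant under postcomposition with $h_sg_t$. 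Without this step a null set for $\nu_{\mathcal M}$ could contain the entire image of $\mathscr L$.
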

If $\Lambda$ is $R$-admissible then every light orbit in
$L(\Lambda,R)$ has a corresponding orbit in $F(\Lambda,R)$ so that
the distance (in $\R^2$) between them is bounded by $2R$.
Therefore,  Theorem~\ref{thm:main} follows directly from
Theorem~\ref{thm:mainflat}.
\medskip

\noindent \emph{Outline of strategy.}
We have already seen that the Eaton lens dynamics converts to a dynamics on a plane with ``slit reflectors'' preserving the dynamical features which matter
for our problem. The group $\Z^2$ acts on the slit plane by translations, so let us look at the simplest plane with a $\Z^2$-action, the complex plane.
Take a curve $\widetilde{\gamma}:[0,1] \rightarrow \C$ on the complex plane and its image $\gamma:=p  \circ \widetilde{\gamma}: [0,1] \rightarrow \T^2$ on
the the quotient $ \C \stackrel{p }{\rightarrow} \T^2\cong \R^2/\Z^2$ modulo $\Z^2$. To study properties of $\widetilde{\gamma}$, it is enough to look at
the curve  $\gamma$ on $\T^2$ and study its lifts.  Suppose we want to know the location of the endpoint $\widetilde{\gamma}(1)$ of a particular lift
$\widetilde{\gamma}$. To do this we tile the complex plane by $\Z^2$ translates of the unit square $[0,1)^2$, the fundamental domain  representing $\T^2$
and fix an enumeration of the tiles by $\Z^2$. Given that $\widetilde{\gamma}(1) \in p ^{-1}(\gamma(1))$, all we need to locate $\widetilde{\gamma}(1)$ is
the integer coordinate of the tile containing it. That information can be derived from $\gamma$ and the topology of $\T^2$. In fact, the coordinate of the tile containing $\gamma(1)$ is obtained from the tile coordinate of $\gamma(0)$ by adding the number of (oriented) crossings of $\gamma$ with the images of horizontal tile edges and vertical tile edges on $\T^2$ with respect to $p$. This can be calculated as an algebraic intersection number of $\gamma$ (after closing it up to a loop without generating new edge intersections) with the respective homology classes defined by vertical and horizontal edges.

Clearly the geometry and the geodesic dynamics on the slit plane, see Figure \ref{lattice_eaton} on the right, differs from the euclidean one on $\C$
and is certainly more complex. Following the previous idea we want to calculate the intersection numbers of an orbit with the vectors generating the
lattice translation symmetry of the slit plane. Those intersection numbers, or more precisely their asymptotic behavior, turn out to be sufficient to show
our claims. Some technical difficulties emerge. One is that foliations on the quotient tori of the slit planes are not orientable, in the sense that they
do not define flows. Generally for this kind of non orientable foliation on a surface, say $S$, there exists a unique double cover $M \rightarrow S$ the
\emph{orientation cover}, such that the pulled back foliation on $M$ is orientable and hence defines a flow. While the choice of two homology classes on a
torus for intersection calculations is more or less canonical, we need to isolate the right homology classes on the orientation cover.

Our general method to study the long term behavior of (vertical) leaves applies to various cases.  To describe it we start with a less general case, in
which the quotient torus, say $T^2$, carries an orientation preserving homeomorphism  $\phi: T^2 \rightarrow T^2$ which is locally affine linear, a so
called \emph{affine homeomorphism}. Any affine homeomorphism, has constant derivative $D\phi \in \text{PSL}_2(\R)$ (except for a discrete set of points).
We further need $\phi$ to be a {\em pseudo-Anosov} map, that is $D\phi$ is hyperbolic, which is the case if $|\text{tr}(D\phi)|>2$. A pseudo-Anosov has two
eigendirections, defining the stable and unstable eigenfoliations on $T^2$. The leaves of the unstable foliation are expanded under the application of $\phi$,
while the leaves of the stable foliation are contracted. Up to a conjugation with a convenient affine linear transformation we may actually assume the
vertical direction on $T^2$ is the stable eigendirection of a pseudo-Anosov on $T^2$.

Consider the homology class of a loop defined by closing up a segment of a vertical leaf.  Then the key step in determining the intersection numbers is to calculate
the induced map $\phi_{\ast}: H_1(T^2,\R) \rightarrow H_1(T^2,\R)$ in homology. We can
extract  some of the information on the shape of
light rays from $\phi_{\ast}$. If $\phi_{\ast}$ is hyperbolic, the vertical direction is  confined in a strip, moreover its stable eigendirection gives (together with some
coordinate adjustment) the direction of the confining strip. This uses the phenomenon of bounded deviation discovered by Zorich, which applies to the
vertical flow on the orientation cover $M_{T^2} \rightarrow T^2$. Note that the existence of pseudo-Anosov on $T^2$ implies the existence of pseudo-Anosovs
on  $M_{T^2}$.
\smallskip

Surfaces which have vertical foliations stabilized by a pseudo-Anosov are rather rare and in order to show the claim of Theorem~\ref{thm:mainflat} we need
to consider a larger set of surfaces.  The connection with the pseudo-Anosov case is made by the observation that orientation preserving homeomorphisms
act on certain sets of flat surfaces with fixed topological data. Recall that we want to study dynamics on surfaces which are branched torus covers of degree two
with two ramification points, see Figures \ref{surfcomcover} and \ref{surfcomgenform}, so let us consider the set of those covers. Up to isomorphism
this set of torus covers defines a certain locus, say $\mathcal{M}$, in a stratum of moduli space. A point in $\mathcal{M}$ represents a surface equipped
with a holomorphic one form determining the geometry and dynamics on the surface. We will move a surface around in $\mathcal{M}$, by applying the one
parameter subgroup $(g_t)_{ t\in \R}$ of $\text{SL}_2(\R)$, where $g_t = \left[\begin{smallmatrix} e^t & 0 \\ 0 & e^{-t} \end{smallmatrix} \right]$. The
orbit of $M \in \mathcal{M}$ under $(g_t)_{ t\in \R}$ is a {\em Teichm\"uller geodesic}.
The locus $\mathcal{M}$ is connected, carries a natural orbifold structure and also admits a natural flow invariant ergodic finite measure equivalent to
Lebesgue measure in local coordinates.

To study the asymptotic behavior of homology classes along Teichm\"uller geodesics one needs to replace the first homology group of a surface by a global
object over $\mathcal{M}$. The {\em (homological) Hodge bundle} $\mathcal{H}$ over $\mathcal{M}$ is the bundle having as fiber over $M \in \mathcal{M}$ the
first homology group of $M$. The object describing how homology classes change along geodesics and this is the Kontsevich-Zorich cocycle $G^{KZ}_t:
\mathcal{H} \rightarrow \mathcal{H}$. Ergodicity of the invariant measure on $\mathcal{M}$ allows us to apply Oseledet's theorem and as a consequence the
Kontsevich-Zorich cocycle has Lyapunov exponents. We only need particular Lyapunov exponents for a flow invariant sub-bundle characterized by the homology
classes defining the infinite cover. Those Lyapunov exponents are known and were calculated by Bainbridge \cite{Bain}. This strategy produces
Theorem~\ref{thm:mainflat}, i.e.\  the existence of a  common trend for vertical light rays in $F(\Lambda,R)$ when the radius $R$ is fixed and the choice
of lattice $\Lambda$ is random.

Our initial pseudo-Anosov example can be seen as a special case of this argument. In fact pseudo-Anosov maps appear as $g_{t_0}$, where $t_0>0$ is a period
of a periodic Teichm\"uller geodesic. The previous strategy applies, if we restrict the respective objects defined over $\mathcal{M}$ to the closed
geodesic and replace the Liouville measure on $\mathcal{M}$ by the flow invariant probability measure supported on the periodic orbit. Applying this
observation, we show in Section~\ref{sec:examples} that the vertical direction on $L(\Lambda,R)$, with $R=1/3$ and
$\Lambda=(1,0)\,\Z+((3+\sqrt{21})/6,1)\,\Z$, is a
 pseudo-Anosov eigendirection and so every vertical
light ray in $L(\Lambda,R)$ is trapped in a band. We also show that every band has slope $-(\sqrt{21}+3\sqrt{5})/4$.

\begin{remark}
The authors believe that a stronger version of Theorem~\ref{thm:main} is true, namely for every $R$-admissible lattice $\Lambda$ and for almost every
direction $\theta\in S^1$ all light rays on $L(\Lambda,R)$ in the direction $\theta$ are trapped in bands. However, we expect that its proof needs a much
more advanced approach than used in the present work.
\end{remark}

\section{From lens lattices to translation surfaces}\label{sec:red}

At the beginning of this section we briefly recall some basic
notions related to translation surfaces and their $\Z^d$-covers.
For further background material  we refer the reader to
\cite{Ful,Ho-We,Ma,ViB}.

\subsection{Translation surfaces and their $\Z^d$-covers}
A translation surface is a pair $(M,\omega)$ where $M$ is an orientable Riemann surface  (not necessarily compact)
and  $\omega$ is a translation structure on $M$, that is a non-zero holomorphic $1$-form also called \emph{Abelian differential}.
Let $\Sigma=\Sigma_\omega\subset M$ denote the set of zeros of $\omega$ which are also the
\emph{singular points} of the translation structure. For every $\theta\in S^1 =
\R/2\pi \Z $ denote by $X_\theta=X^{\omega}_\theta$ the vector field in direction $\theta$ on $M\setminus\Sigma$, i.e.\
$\omega(X^{\omega}_\theta)=e^{i\theta}$. Then the corresponding directional flow $(\varphi^{\theta}_t)_{t\in\R}=(\varphi^{\omega,\theta}_t)_{t\in\R}$, also known as \emph{translation flow}, on $M\setminus\Sigma$ preserves the volume form
$\nu_{\omega}=\frac{i}{2}\omega\wedge\overline{\omega}=\Re(\omega)\wedge\Im(\omega)$. We will use the notation $(\varphi^{v}_t)_{t\in\R}$  for the
\emph{vertical flow} (corresponding to $\theta = \frac{\pi}{2}$). If $M$ is compact let us denote the area of $(M,\omega)$ by  $A(\omega)=\nu_\omega (M)$.

Let $(M,\omega)$ be a compact connected translation surface.  A
{\em $\Z^d$-cover} of $M$ is a surface $\widetilde{M}$ with a free
totally discontinuous action of the group $\Z^d$ such that the
quotient manifold $\widetilde{M}/\Z^d$ is homeomorphic to $M$.
Then the projection $p:\widetilde{M}\to M$  is called a {\em
covering map}. Denote by $\widetilde{\omega}$ the pullback of the
form $\omega$ by the map $p$. Then
$(\widetilde{M},\widetilde{\omega})$ is a translation surface as
well.
\begin{remark}\label{rem:cover}
Up to isomorphism $\Z^d$-covers of $M$  are in one-to-one
correspondence with  $H_1(M,\Z)^d$. For any elements $\xi_1,\xi_2\in
H_1(M,\Z)$ denote by $\langle \xi_1,\xi_2 \rangle $ the algebraic
intersection number of $\xi_1$ with $\xi_2$.

The  $\Z^d$-cover $\widetilde{M}_\gamma$ determined by $\gamma\in H_1(M,\Z)^d$ has
the following properties:

If $\sigma$ is a closed curve in $M$, $[\sigma]\in H_1(M,\Z)$ and
\[\bar{n}=(n_1,\ldots,n_d):=(\langle [\sigma],\gamma_1 \rangle,\ldots,\langle [\sigma],\gamma_d\rangle ) \in
\mathbb{Z}^d\]
then $\sigma$  lifts to a path $\widetilde{\sigma}: [t_0, t_1]\to \widetilde{M}_\gamma$
such that $\widetilde{\sigma}(t_1) = \bar{n} \cdot
\widetilde{\sigma}(t_0)$, where $\cdot$ denotes the action of
$\Z^d$ on $\widetilde{M}_\gamma$.
\end{remark}
\subsection{A translation surface associated to $F(\Lambda, R)$}
As in the case of rational billiards, let us consider a flow
describing the dynamics of vertical light rays in $F(\Lambda,R)$.
Let us label the slits of $F(\Lambda,R)$ by elements of $\Z^2$.
Since the directions of  such orbits are either positive or negative, the
phase space of the flow consists of two copies of $F(\Lambda,R)$,
one $F_+(\Lambda,R)$ for positive and one $F_-(\Lambda,R)$ for
negative orbit segments. Denote by
$\zeta_{\pm}:F_{\pm}(\Lambda,R)\to F(\Lambda,R)$ the map
establishing a natural identification of each copy with
$F(\Lambda,R)$.
\begin{figure}[h]
\includegraphics[width=0.8\textwidth]{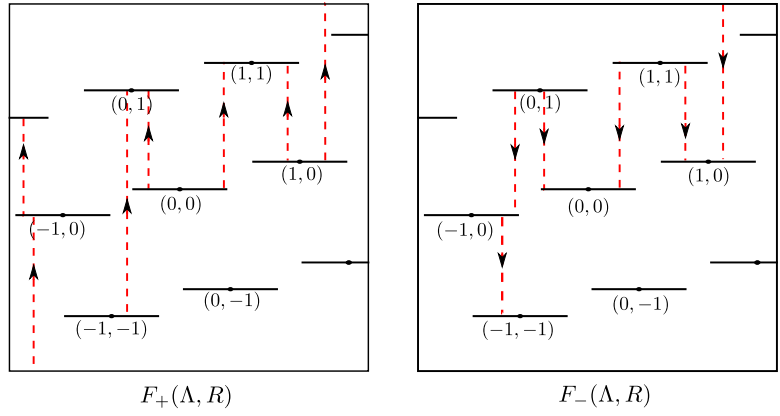}
\caption{The phase space of the light rays flow}\label{lrf1}
\end{figure}
The light ray flow $(\widetilde{\varphi}_t)_{t\in\R}$ acts  on
each point of the phase space moving it vertically (in positive
or negative direction) with unit speed until it hits the
interior of a slit (flat lens), then the point is rotated around
the center of the slit by the angle $\pi$ and it changes
from copy $F_{\pm}(\Lambda,R)$ to copy $F_{\mp}(\Lambda,R)$, see
Figure~\ref{lrf1}.
\begin{figure}[h]
\includegraphics[width=0.8\textwidth]{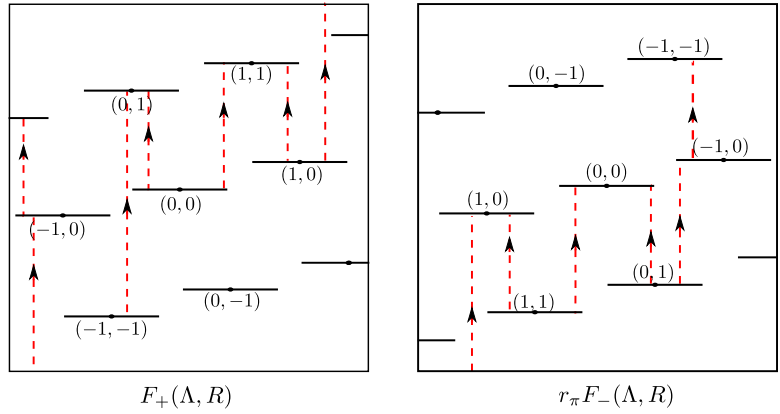}
\caption{Components of the surface $\widetilde{M}(\Lambda, R)$}\label{lrf2}
\end{figure}
Let us rotate the copy $F_-(\Lambda,R)$ by the angle $\pi$ around the center of $(0,0)$-th slit
(denote this rotation by $r_\pi$),  see
Figure~\ref{lrf2}. Next glue the top (bottom) of the $(m,n)$-th
slit in $F_+(\Lambda,R)$ to the bottom (top) of the $(m,n)$-th
slit in $r_{\pi}F_-(\Lambda,R)$ for every $(m,n)\in\Z^2$. The
resulting surface will be denoted by $\widetilde{M}(\Lambda,R)$.
The surface $\widetilde{M}(\Lambda,R)$ carries a natural
translation structure $\widetilde{\omega}$ whose
restrictions to $F_+(\Lambda,R)$ and $r_{\pi}F_-(\Lambda,R)$
are defined by $dz$. Then the zeros of $\widetilde{\omega}$ (all of
order one) arise from the ends of the slits. Moreover, the light
rays flow $(\widetilde{\varphi}_t)_{t\in\R}$ regarded as a flow on
$\widetilde{M}(\Lambda,R)$ is the translation flow in the vertical
direction.
\begin{figure}[h]
\includegraphics[width=0.8\textwidth]{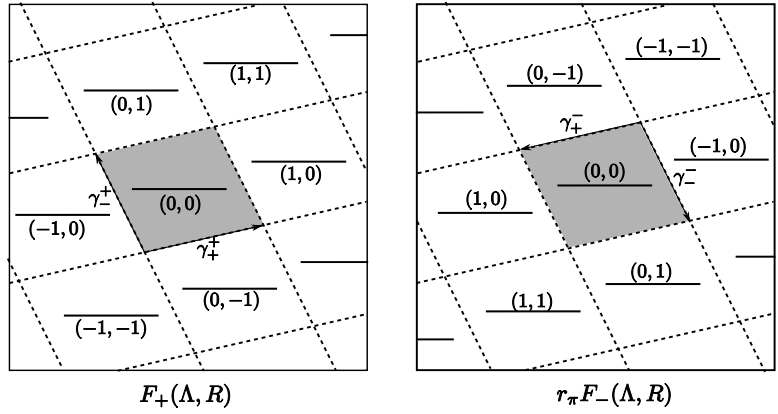}
\caption{A domain for the  $\Lambda$-action on
$\widetilde{M}(\Lambda, R)$}\label{lrf3}
\end{figure}
Let us consider a free totally discontinuous action of $\Lambda$
on $\widetilde{M}(\Lambda,R)$ given  by
\[\lambda\cdot \widetilde{x}=
  \begin{cases}
    \zeta_+^{-1}(\zeta_+(\widetilde{x})+\lambda) & \text{ if }\widetilde{x}\in F_+(\Lambda,R), \\
    r_\pi\circ\zeta_-^{-1}((\zeta_-\circ r_\pi^{-1}(\widetilde{x}))+\lambda) & \text{ if }\widetilde{x}\in r_\pi F_-(\Lambda,R).
  \end{cases}
\]
Since this action preserves the form $\widetilde{\omega}$, we can
consider the quotient translation surface which will be denoted by
$M(\Lambda,R)$.

\subsection{A convenient representation of $M(\Lambda,R)$.}
In this section we describe a representation of the translation surface
$M(\Lambda,R)$ such that its $\Z^2$-cover $\widetilde{M}(\Lambda,R)$
has a convenient form.

Suppose that $R>0$ and $\Lambda\subset \R^2$ is a unimodular lattice satisfying \eqref{cond:adm}. Then there exists a \emph{positive} basis $\gamma_+,
\gamma_-$ of $\Lambda$, i.e.\ a basis with $\gamma_+\in\R_{++}$ and $\gamma_-\in\R_{-+}$, where
\[
\R_{++}=\{(x,y)\in\R^2:x>0,y\geq 0\},\quad
\R_{-+}=\{(x,y)\in\R^2:x\leq 0,y>0\}.
\]
Let us consider the centered parallelogram
\[P(\gamma_+,\gamma_-)=[-1/2,1/2)\gamma_++[-1/2,1/2)\gamma_-\]
generated by vectors $\gamma_+,\gamma_-\in\R^2$.
\begin{lemma}
Let $R>0$ and let $\Lambda$ be a unimodular lattice so  that \eqref{cond:adm} is valid.
Then there exists a positive basis $\gamma_+$, $\gamma_-$ of
$\Lambda$ such that
\begin{equation}\label{eq:contint}
[-R,R]\times\{0\}\text{ is a subset of the interior of }P(\gamma_+,\gamma_-).
\end{equation}
\end{lemma}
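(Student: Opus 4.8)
The plan is to reduce the containment \eqref{eq:contint} to two scalar inequalities on the $y$-coordinates of the basis vectors, and then to produce a positive basis meeting them, splitting according to whether or not $\Lambda$ contains a horizontal vector. Write $h:=1/(2R)$. If $\gamma_+=(a,b)\in\R_{++}$ and $\gamma_-=(c,d)\in\R_{-+}$ is any basis of $\Lambda$, then $\det[\gamma_+\,|\,\gamma_-]=ad-bc>0$ since $a,d>0$ and $b,-c\ge0$, so $ad-bc=1$ by unimodularity; Cramer's rule gives $(t,0)=td\,\gamma_+-tb\,\gamma_-$ for all $t\in\R$, hence $(t,0)\in\Int P(\gamma_+,\gamma_-)$ exactly when $|td|<\tfrac12$ and $|tb|<\tfrac12$. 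Letting $t$ run over $[-R,R]$ shows that \eqref{eq:contint} holds if and only if $b<h$ and $d<h$, so the task becomes: exhibit a positive basis $\gamma_+,\gamma_-$ of $\Lambda$ with $(\gamma_+)_y<h$ and $(\gamma_-)_y<h$.

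Case 1: $\Lambda$ contains a nonzero horizontal vector. Then $\Lambda\cap(\R\times\{0\})=\Z(\alpha,0)$ for a unique $\alpha>0$, the vector $(\alpha,0)$ is primitive in $\Lambda$, and \eqref{cond:adm} applied to the slits centred at $0$ and at $(\alpha,0)$ forces $\alpha>2R$. Complete $(\alpha,0)$ to a basis $\{(\alpha,0),w\}$; unimodularity gives $|w_y|=1/\alpha$, and after replacing $w$ by $-w$ if needed and subtracting a suitable integer multiple of $(\alpha,0)$ one may take $w_y=1/\alpha>0$ and $w_x\in(-\alpha,0]$. Then $\gamma_+:=(\alpha,0)\in\R_{++}$ and $\gamma_-:=w\in\R_{-+}$ form a positive basis with $(\gamma_+)_y=0<h$ and $(\gamma_-)_y=1/\alpha<h$, as required.

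Case 2: $\Lambda$ contains no nonzero horizontal vector. Then $\lambda\mapsto\lambda_y$ is injective on $\Lambda$, so $\pi_y(\Lambda)$ is dense in $\R$. This is the step I expect to be the main obstacle: density makes it trivial to find lattice vectors with arbitrarily small positive $y$-coordinate, but one needs two such vectors that are moreover of opposite sign in the $x$-coordinate (so as to place one in $\R_{++}$ and the other in $\R_{-+}$) and that form a $\Z$-basis of $\Lambda$. I would get this from the continued fraction of $\tau:=p_y/q_y$, where $\{p,q\}$ is a basis of $\Lambda$ chosen --- after sign changes and possibly a transposition, using the absence of horizontal vectors --- so that $p_y,q_y>0$ and $\det[p\,|\,q]=1$; here $\tau\notin\Q$, again because $\Lambda$ has no horizontal vector. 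Writing $p_n/q_n$ for the convergents and $u_n:=q_np-p_nq\in\Lambda$, one has $(u_n)_y=q_y(q_n\tau-p_n)$, whose sign is $(-1)^n$ and whose modulus decreases to $0$; the identity $p_xq_y-p_yq_x=1$ yields $(u_n)_x=q_n/q_y+q_x(q_n\tau-p_n)\to+\infty$; and $\det[u_n\,|\,u_{n+1}]=\pm1$, since consecutive convergents satisfy $p_{n+1}q_n-p_nq_{n+1}=\pm1$. Choosing an even $n$ large enough that $(u_n)_x,(u_{n+1})_x>0$ and $|(u_n)_y|,|(u_{n+1})_y|<h$, the pair $\gamma_+:=u_n\in\R_{++}$, $\gamma_-:=-u_{n+1}\in\R_{-+}$ is a $\Z$-basis of $\Lambda$ with both $y$-coordinates in $(0,h)$, and by the first paragraph \eqref{eq:contint} follows.

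What I have glossed --- primitivity of $(\alpha,0)$ in $\Lambda$, the standard facts about convergents (alternation of the sign of $q_n\tau-p_n$, monotone decay of $|q_n\tau-p_n|$ to $0$, the identity $p_{n+1}q_n-p_nq_{n+1}=(-1)^n$), and the one-line formula for $(u_n)_x$ --- is routine. The only real issue is the simultaneous control of smallness in the $y$-direction, of the sign in the $x$-direction, and of the basis property in Case 2, which the continued fraction supplies: consecutive convergents automatically span $\Z^2$, and since $q_n\tau-p_n$ alternates in sign one may negate $u_{n+1}$ to cure its $x$-coordinate without destroying positivity of its $y$-coordinate.
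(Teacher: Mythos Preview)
Your proof is correct. Both your argument and the paper's hinge on the same reduction---that \eqref{eq:contint} is equivalent to both $y$-coordinates of the basis vectors lying below $1/(2R)$---and both then drive those $y$-coordinates down by a Euclidean-type process. The packaging differs: the paper runs a subtractive Euclidean algorithm directly on an initial positive basis $(a^n,b^n)$, replacing the vector with larger $y$-coordinate by the difference, and shows the $y$-coordinates tend to zero (or one hits zero, where \eqref{cond:adm} finishes the job); your version instead splits off the horizontal case explicitly, and in the generic case passes to the scalar irrational $\tau=p_y/q_y$ and invokes the standard machinery of convergents. Your approach buys access to ready-made facts (alternation of sign, decay of $|q_n\tau-p_n|$, unimodularity of consecutive convergents), at the cost of the extra translation step; the paper's approach is more self-contained and handles both cases uniformly without needing to name the continued fraction at all. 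Either way it is the same underlying Euclidean reduction on the second coordinates.
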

\begin{proof}
Since the intersection of $P(\gamma_+,\gamma_-)$ and the line $\R\times\{0\}$ is a symmetric horizontal interval of length $1/\max(\gamma_2^+,\gamma_2^-)$
($\gamma_2^\pm$ is the second coordinate of $\gamma_\pm$), we need find a positive basis $\gamma_+$, $\gamma_-$ such that
$0\leq\gamma_2^+,\gamma_2^-<\frac{1}{2R}$. Such a basis can be found using an Euclidean type algorithm starting from any positive basis $a^0$, $b^0$ of
$\Lambda$. Indeed, let us consider the sequence $(a^n, b^n)_{n\geq 0}$ of positive bases of $\Lambda$ defined inductively by:
\begin{align*}
a^{n+1}=a^n-b^n,\ b^{n+1}=b^n& \text{ if }a^n-b^n\in\R_{++}\\
a^{n+1}=a^n,\ b^{n+1}=b^n-a^n& \text{ if }b^n-a^n\in\R_{-+}.
\end{align*}
If there exists $n\geq 0$ such that $a^n_2=0$ then, by
\eqref{cond:adm}, $a^n_1>2R$. Since $\Lambda$ is unimodular, we
have $1=a^n_1b^n_2-a^n_2b^n_1=a^n_1b^n_2$. Therefore,
$b^n_2<\frac{1}{2R}$ and hence $a^n$, $b^n$ is a required positive
basis.

Now suppose that $a^n_2>0$ for every $n\geq 0$. By definition, the
sequences $(a^n_2)_{n\geq 0}$ and $(b^n_2)_{n\geq 0}$ are
non-increasing and hence $a^n_2\to a\geq 0$ and $b^n_2\to b\geq
0$. Since $(a^n_2)_{n\geq 0}$ and $(b^n_2)_{n\geq 0}$ are both
positive, we have $a_2^{n+1}=a_2^n-b_2^n$ for infinitely many
$n\geq 0$ and $b_2^{n+1}=b_2^n-a_2^n$ for infinitely many $n\geq
0$. It follows that $a=a-b$ and $b=b-a$, so $a=b=0$.  Therefore,
we can find $n\geq 0$ with $a^n_2,b^n_2<\frac{1}{2R}$. Then $a^n$,
$b^n$ is a required positive basis.
\end{proof}

Suppose that $\gamma_+$, $\gamma_-$ is a positive basis of $\Lambda$ satisfying \eqref{eq:contint} and
consider the action of the lattice $\Lambda$ on $F(\Lambda,R)$ by translations. Then $P(\gamma_+,\gamma_-)$ is a fundamental domain for the $\Lambda$-action on $F(\Lambda,R)$ and $P(\gamma_+,\gamma_-)$
contains exactly one slit.

Let $\zeta:\widetilde{M}(\Lambda,R)\to F(\Lambda,R)$ be the map given by
\begin{equation}\label{def:zeta}
\zeta (\widetilde{x})=
  \begin{cases}
    \zeta_+(\widetilde{x}) & \text{ if }\widetilde{x}\in F_+(\Lambda,R), \\
    \zeta_-\circ r_\pi^{-1}(\widetilde{x}) & \text{ if }\widetilde{x}\in r_\pi F_-(\Lambda,R).
  \end{cases}
\end{equation}
Then $\zeta$ is two-to-one and, by the definition of the
$\Lambda$-action on $\widetilde{M}(\Lambda,R)$, we have
\begin{equation}\label{def:act}
\zeta(\lambda\cdot
\widetilde{x})=\zeta(\widetilde{x})+\lambda\quad\text{ for all
}\quad\lambda\in\Lambda\text{ and } \widetilde{x}\in
\widetilde{M}(\Lambda,R).
\end{equation}
Therefore, the set
\begin{equation}\label{def:domainD}
D:=\zeta^{-1}P(\gamma_+,\gamma_-)\subset \widetilde{M}(\Lambda,R)
\end{equation} (see the shaded area in Figure~\ref{lrf3}) is
a fundamental domain for the $\Lambda$-action on $\widetilde{M}(\Lambda,R)$. It follows that the compact translation surface ${M}(\Lambda,R)$ can be represented as the union of two
identical tori glued along a horizontal slit of length $2R$ as in Figure~\ref{surfcomlens}.

Let $p:\widetilde{M}(\Lambda,R)\to M(\Lambda,R)$ denote the
covering map and consider vectors $\gamma_+^+$,
$\gamma_+^-$, $\gamma_-^+$, $\gamma_-^-$ in
$\widetilde{M}(\Lambda,R)$ as in Figure~\ref{lrf3}. Then
$\zeta(\gamma_+^+)=\zeta(\gamma_+^-)=\gamma_+$ and
$\zeta(\gamma_-^+)=\zeta(\gamma_-^-)=\gamma_-$. We will denote
also by $\gamma_+^+$, $\gamma_+^-$, $\gamma_-^+$, $\gamma_-^-$ the
corresponding oriented curves in $\widetilde{M}(\Lambda,R)$. The
projections $p(\gamma_+^+)$, $p(\gamma_+^-)$, $p(\gamma_-^+)$,
$p(\gamma_-^-)$ are oriented loops in ${M}(\Lambda,R)$ whose
homology classes generate the group $H_1({M}(\Lambda,R),\Z)$. Define
\[\gamma_1:=[p(\gamma_+^+)]+[p(\gamma_+^-)],\
\gamma_2:=[p(\gamma_-^+)]+[p(\gamma_-^-)]\in H_1({M}(\Lambda,R),\Z),\] see Figure~\ref{surfcomlens}.
Now use the group isomorphism
$\Z^2\ni(m,n)\mapsto m\gamma_++n\gamma_-\in\Lambda$ to convert the $\Lambda$-action on $F(\Lambda,R)$ and $\widetilde{M}(\Lambda,R)$ into a $\Z^2$-action. By \eqref{def:act} we have
\begin{equation}\label{def:actzety}
\zeta((m,n)\cdot
\widetilde{x})=(m,n)\cdot\zeta(\widetilde{x})\quad\text{ for all
}\ (m,n)\in\Z^2\text{ and }\ \widetilde{x}\in
\widetilde{M}(\Lambda,R).
\end{equation}

\begin{figure}[h]
\includegraphics[width=0.5\textwidth]{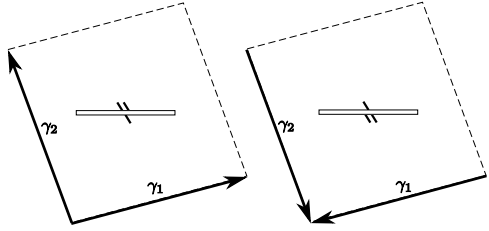}
\caption{The surface $M(\Lambda,R)$ \label{surfcomlens}}
\end{figure}
\begin{lemma}
The translation surface $\widetilde{M}(\Lambda,R)$  is the
$\Z^2$-cover of the compact translation surface ${M}(\Lambda,R)$
defined by $\gamma=(\gamma_2,-\gamma_1)\in H_1(M,\Z)^2$.
\end{lemma}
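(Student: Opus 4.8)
The statement asserts that the $\Z^2$-cover $p:\widetilde M(\Lambda,R)\to M(\Lambda,R)$, together with the $\Z^2$-action fixed above, coincides as a $\Z^2$-cover with the cover $\widetilde M_\gamma$ of Remark~\ref{rem:cover} for $\gamma=(\gamma_2,-\gamma_1)$. By that remark a $\Z^2$-cover is determined by its monodromy homomorphism $\rho:H_1(M(\Lambda,R),\Z)\to\Z^2$, which sends the class of a closed curve $\sigma$ to the element $\bar n$ with $\widetilde\sigma(t_1)=\bar n\cdot\widetilde\sigma(t_0)$ (well defined on homology since $\Z^2$ is abelian), and $\widetilde M_{(\gamma_2,-\gamma_1)}$ is the cover with monodromy $\rho_\gamma([\sigma])=(\langle[\sigma],\gamma_2\rangle,-\langle[\sigma],\gamma_1\rangle)$. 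The plan is therefore to compute the monodromy $\rho$ of $p$ on the four classes $[p(\gamma_+^+)],[p(\gamma_+^-)],[p(\gamma_-^+)],[p(\gamma_-^-)]$, which generate $H_1(M(\Lambda,R),\Z)$, and to check that it agrees there with $\rho_\gamma$.

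The monodromy of $p$ can be read off directly. Each loop $p(\gamma_\bullet^\bullet)$ is the $p$-image of the curve $\gamma_\bullet^\bullet$, which lies inside a single sheet, $F_+(\Lambda,R)$ or $r_\pi F_-(\Lambda,R)$, of $\widetilde M(\Lambda,R)$; by uniqueness of path lifting $\gamma_\bullet^\bullet$ is itself the $p$-lift of $p(\gamma_\bullet^\bullet)$ through $\gamma_\bullet^\bullet(t_0)$. Since $\zeta$ restricts to a bijection of each sheet onto $F(\Lambda,R)$, the $\Z^2$-action preserves each sheet, and $\zeta(\gamma_+^+)=\zeta(\gamma_+^-)=\gamma_+$, identity \eqref{def:actzety} forces $\gamma_+^+(t_1)=(1,0)\cdot\gamma_+^+(t_0)$ and $\gamma_+^-(t_1)=(1,0)\cdot\gamma_+^-(t_0)$; the same argument with $\gamma_-$ gives $\gamma_-^+(t_1)=(0,1)\cdot\gamma_-^+(t_0)$ and $\gamma_-^-(t_1)=(0,1)\cdot\gamma_-^-(t_0)$. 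Hence $\rho([p(\gamma_+^+)])=\rho([p(\gamma_+^-)])=(1,0)$ and $\rho([p(\gamma_-^+)])=\rho([p(\gamma_-^-)])=(0,1)$.

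Next I would check that $\rho_\gamma$ takes the same values, i.e.\ compute the intersection numbers of each generator against $\gamma_1=[p(\gamma_+^+)]+[p(\gamma_+^-)]$ and $\gamma_2=[p(\gamma_-^+)]+[p(\gamma_-^-)]$. Using the representation of $M(\Lambda,R)$ as two slit tori glued along the slit (Figure~\ref{surfcomlens}), the loops $p(\gamma_+^+),p(\gamma_-^+)$ are carried by curves lying in the first torus, missing the slit, and meeting transversally once, so that $\langle[p(\gamma_+^+)],[p(\gamma_-^+)]\rangle=1$; similarly $\langle[p(\gamma_+^-)],[p(\gamma_-^-)]\rangle=1$ on the second torus; and all other pairwise intersection numbers among the four classes vanish, a curve in one torus being isotopic to one missing the slit and hence the other torus. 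Substituting yields $\langle[p(\gamma_+^\bullet)],\gamma_1\rangle=0$, $\langle[p(\gamma_+^\bullet)],\gamma_2\rangle=1$, $\langle[p(\gamma_-^\bullet)],\gamma_1\rangle=-1$, $\langle[p(\gamma_-^\bullet)],\gamma_2\rangle=0$, so $\rho_\gamma([p(\gamma_+^\bullet)])=(1,0)$ and $\rho_\gamma([p(\gamma_-^\bullet)])=(0,1)$, which matches $\rho$. By Remark~\ref{rem:cover} the two covers coincide.

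I expect the main obstacle to be the \emph{orientation bookkeeping} in the last paragraph: the vanishing of the cross terms is clear once representatives are pushed off the slit, but that \emph{both} diagonal pairings equal $+1$ — in particular the one supported on the rotated sheet $r_\pi F_-(\Lambda,R)$ — requires tracking the orientation that the rotation $r_\pi$ and the prescribed top-to-bottom slit gluings of Section~\ref{sec:red} induce on $M(\Lambda,R)$; a sign error there would replace the tuple $(\gamma_2,-\gamma_1)$ by a different one. A cleaner way to package these signs: $\zeta$ descends to a degree-two branched cover $\bar\zeta:M(\Lambda,R)\to F(\Lambda,R)/\Lambda$ realizing $\widetilde M(\Lambda,R)$ as the pullback of the universal cover $F(\Lambda,R)\to F(\Lambda,R)/\Lambda$, so $\rho=\rho_0\circ\bar\zeta_*$ with $\rho_0$ the tautological isomorphism $H_1(F(\Lambda,R)/\Lambda,\Z)\cong\Z^2$; combining the transfer adjunction $\langle\bar\zeta_*a,c\rangle=\langle a,\bar\zeta_!c\rangle$ with $\bar\zeta_!([\gamma_+])=\gamma_1$, $\bar\zeta_!([\gamma_-])=\gamma_2$ (immediate from $\zeta(\gamma_\pm^\bullet)=\gamma_\pm$) and $\langle[\gamma_+],[\gamma_-]\rangle=1$ (exactly the positivity of the basis $\gamma_+,\gamma_-$) then gives $\rho([\sigma])=(\langle[\sigma],\gamma_2\rangle,-\langle[\sigma],\gamma_1\rangle)$ directly.
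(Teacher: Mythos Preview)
Your proof is correct and follows essentially the same approach as the paper: compute the monodromy of the cover on the four generating loops $p(\gamma_\pm^\pm)$ by reading off the endpoints of their lifts, and match these against the intersection pairings $(\langle\,\cdot\,,\gamma_2\rangle,-\langle\,\cdot\,,\gamma_1\rangle)$. The paper simply asserts the final intersection values, while you supply the geometric justification (and a transfer-map reformulation), but the argument is the same.
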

\begin{proof}
In view of Remark~\ref{rem:cover}, we need to choose a finite set
of oriented loops in $M(\Lambda,R)$ whose homology classes
generate $H_1({M}(\Lambda,R),\Z)$ and show, that for every such
loop $\sigma:[0,1]\to M(\Lambda,R)$ any its lift
$\widetilde{\sigma}:[0,1]\to \widetilde{M}(\Lambda,R)$ fulfills
\[\widetilde{\sigma}(1)=\big(\langle[\sigma],\gamma_2\rangle,
-\langle[\sigma],\gamma_1\rangle \big)\cdot \widetilde{\sigma}(0).\] Of course, we will deal with the loops $p(\gamma_+^+)$, $p(\gamma_+^-)$,
$p(\gamma_-^+)$, $p(\gamma_-^-)$ whose lifts $\gamma_+^+$, $\gamma_+^-$, $\gamma_-^+$, $\gamma_-^-$ satisfy
\[\gamma_+^\pm(1)=(1,0)\cdot\gamma_+^\pm(0),\quad \gamma_-^\pm(1)=(0,1)\cdot\gamma_-^\pm(0), \] see Figure~\ref{lrf3}.
On the other hand,
\[\big(\langle[p(\gamma_+^\pm)],\gamma_2\rangle,
-\langle[p(\gamma_+^\pm)],\gamma_1\rangle \big)=(1,0)\text{ and
}\big(\langle[p(\gamma_-^\pm)],\gamma_2\rangle,
-\langle[p(\gamma_-^\pm)],\gamma_1\rangle \big)=(0,1),\] which
completes the proof.
\end{proof}

\section{Geometric step of the proof of Theorem~\ref{thm:mainflat}}
The proof of Theorem~\ref{thm:mainflat} can be divided into two parts. The  first part
relies on Teichm\"uller dynamics,  the Kontsevich-Zorich cocycle
and a bounded deviation phenomenon. The corresponding statement, Theorem~\ref{thm:existxi} below, ensures the existence (for a.e.\ $\Lambda$) of a
non-trivial homology class $\xi\in H_1(M(\Lambda,R),\R)$ such that, roughly speaking, the intersection number of $\xi$ with arbitrary vertical orbit
segment on $M(\Lambda,R)$ is uniformly bounded. This is technically the most involved part, so we postpone the proof of Theorem~\ref{thm:existxi} together
with the necessary background until Section~\ref{Teich:sec}.

The second part, Theorem~\ref{thm:existband} below, is geometric. We use the homology class $\xi\in H_1(M(\Lambda,R),\R)$ to identify the direction  of the
bands trapping vertical light rays in $F(\Lambda,R)$. More precisely, we show that this direction is given by the vector
\[\bar{v}(\Lambda,\xi):=\langle\gamma_2,\xi\rangle\gamma_+-\langle\gamma_1,\xi\rangle\gamma_-\in\R^2,\]
for vectors $\gamma_+$, $\gamma_-$ and  homology classes $\gamma_1$, $\gamma_2$ defined in Section~\ref{sec:red}.

In order to formulate Theorem \ref{thm:existxi} and Theorem \ref{thm:existband} we need auxiliary notation.
\begin{notation}
Let $(\widetilde{M}_\gamma,\widetilde{\omega}_\gamma)$ be a
$\Z^d$-cover of a compact translation surface $(M,\omega)$.
Denote by $M^+_\omega$ the set of points $x\in M$
such that  the positive semi-orbit $(\varphi^v_t(x))_{t\geq 0}$ on
$(M,\omega)$ is well defined. Let $D\subset \widetilde{M}_\gamma$
be a bounded fundamental domain of the cover such that the
interior of $D$ is path-connected and the boundary of $D$ is a
finite union of intervals. For every $x\in M^+_\omega$ and $t>0$
define the element $\sigma^{\omega}_t(x) \in H_1(M,\Z)$ as
the homology class of the loop formed by the segment of the vertical orbit
of $x$ from $x$ to $\varphi^v_t(x)$ closed up by the shortest curve
joining $\varphi^v_t(x)$ with $x$ that does not cross
$p^{-1}(\partial D)$.
\end{notation}

Following \eqref{def:domainD}, we denote fundamental domains for surfaces $M(\Lambda,R)$
and their covers $\widetilde{M}(\Lambda,R)$ by $D$.

\begin{theorem}\label{thm:existxi}
Let $\omega$ be the Abelian differential on $M$ determining the translation structure on  $M(\Lambda,R)$. Then for every $R>0$ and $\mu_{\mathscr{L}}$-a.e.\ lattice $\Lambda\in\mathscr{L}$ there
exists $0\neq \xi\in\R\gamma_1+\R\gamma_2\subset H_1(M,\R)$ and
$C>0$ such that
\[|\langle\sigma^\omega_t(x),\xi\rangle|\leq C\text{ for every }x\in M^+_\omega\text{ and }t>0.\]
\end{theorem}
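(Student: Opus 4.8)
The plan is to realize $(M(\Lambda,R),\omega)$ as a point of the $\SL(2,\R)$--invariant locus $\mathcal{M}$ of degree two branched torus covers introduced in Section~\ref{sec:intro}, to identify $\R\gamma_1+\R\gamma_2$ with a concrete rank two subbundle of the Hodge bundle, and then to feed Bainbridge's Lyapunov exponents into Zorich's bounded deviation phenomenon. First I would set up the \emph{Prym splitting}. Every surface in $\mathcal{M}$ carries the translation involution $\tau$ exchanging the two slit tori; since $\tau^{*}\omega=\omega$ it acts fibrewise on the Hodge bundle $\mathcal{H}\to\mathcal{M}$ and splits it, compatibly with the Kontsevich--Zorich cocycle, into the $(+1)$-- and $(-1)$--eigenbundles $\mathcal{H}=\mathcal{H}^{+}\oplus\mathcal{H}^{-}$, both of rank two. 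As $\tau$ preserves the intersection form, $\mathcal{H}^{+}$ and $\mathcal{H}^{-}$ are symplectically orthogonal, and each is a symplectic subbundle. The classes dual to $\Re\omega$ and $\Im\omega$ span the tautological plane and lie in $\mathcal{H}^{+}$, so by the rank count $\mathcal{H}^{+}$ \emph{is} the tautological plane, on which the cocycle has exponents $\pm1$. The key geometric observation, which I would check by inspecting Figure~\ref{lrf3}, is that $\gamma_1$ and $\gamma_2$ lie in the \emph{anti}--invariant part: the half turn $r_\pi$ built into the construction of $\widetilde M(\Lambda,R)$ turns the $\Lambda$--action on the $r_\pi F_-(\Lambda,R)$--component into translation by $-\lambda$, forcing $\tau_{*}[p(\gamma^{+}_{\pm})]=-[p(\gamma^{-}_{\pm})]$ and hence $\tau_{*}\gamma_i=-\gamma_i$. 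Since $\gamma_1,\gamma_2$ are independent, $\R\gamma_1+\R\gamma_2$ is exactly the fibre $H^{-}_{M}:=\mathcal{H}^{-}|_{M}$ of the Prym subbundle.

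Next I would extract the class $\xi$ from Oseledets' theorem. By Bainbridge \cite{Bain} the Kontsevich--Zorich cocycle restricted to the rank two symplectic bundle $\mathcal{H}^{-}$ has Lyapunov spectrum $\{\lambda,-\lambda\}$ with $0<\lambda<1$ (Bainbridge's computation gives $\lambda=\tfrac13$); in particular $\lambda\neq0$, so over $\mathcal{M}$ there are no zero exponents. Applying Oseledets' theorem to the Teichm\"uller flow $(g_t)$ with its ergodic invariant measure $\mu_{\mathcal{M}}$, for $\mu_{\mathcal{M}}$--a.e.\ $(M,\omega)$ one gets a splitting $H^{-}_{M}=E^{+}(M)\oplus E^{-}(M)$ into the expanding and the contracting Lyapunov line; I would let $\xi=\xi(M)$ be any non-zero vector of the contracting line $E^{-}(M)$, so that automatically $0\neq\xi\in H^{-}_{M}=\R\gamma_1+\R\gamma_2$.

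Now I would invoke Zorich's bounded deviation theorem \cite{Zor1,Zor2} (in the form valid over the locus $\mathcal{M}$, which is legitimate because there are no zero exponents and, for a.e.\ surface in $\mathcal{M}$, the vertical flow is uniquely ergodic): the homology classes $\sigma^{\omega}_{t}(x)$ of the vertical orbit segments closed up inside the fundamental domain stay within a \emph{bounded} distance of the stable Oseledets subspace, uniformly in $x\in M^{+}_{\omega}$ and $t>0$; equivalently, the projection $\pi^{u}\sigma^{\omega}_{t}(x)$ onto the unstable subspace (along the stable one) is uniformly bounded. Since $\mathcal{H}^{+}$ and $\mathcal{H}^{-}$ are symplectically orthogonal, $\langle\sigma^{\omega}_{t}(x),\xi\rangle=\langle\pi^{-}\sigma^{\omega}_{t}(x),\xi\rangle$, where $\pi^{-}$ is the projection onto $H^{-}_{M}$ along $\mathcal{H}^{+}|_{M}$. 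Decomposing $\pi^{-}\sigma^{\omega}_{t}(x)$ along $E^{+}(M)\oplus E^{-}(M)$: the $E^{-}(M)$--part pairs trivially with $\xi\in E^{-}(M)$ because the intersection form is alternating on a line, while the $E^{+}(M)$--part equals $\pi_{E^{+}(M)}\pi^{u}\sigma^{\omega}_{t}(x)$, a fixed linear image of the bounded quantity $\pi^{u}\sigma^{\omega}_{t}(x)$. Hence $|\langle\sigma^{\omega}_{t}(x),\xi\rangle|\le C$ with $C$ depending only on $(\Lambda,R)$, which is the asserted bound.

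The hard part will be the transfer from ``$\mu_{\mathcal{M}}$--a.e.\ surface'' to ``$\mu_{\mathscr{L}}$--a.e.\ lattice'' with $R$ fixed, since $\{M(\Lambda,R):\Lambda\in\mathscr{L}\}$ is only a submanifold of $\mathcal{M}$ and the conull conditions above cannot be pulled back directly. I would exploit the identity $g_{t}\cdot M(\Lambda,R)=M(g_{t}\Lambda,e^{t}R)$: the forward Teichm\"uller geodesic through $M(\Lambda,R)$ projects, in the absolute directions, onto the geodesic $t\mapsto g_{t}\Lambda$ on $\mathscr{L}$, which is $\mu_{\mathscr{L}}$--generic for a.e.\ $\Lambda$, while its relative coordinate — the period of $\omega$ along the slit read modulo the absolute periods — should be shown to equidistribute in the corresponding fibre; a Fubini-type argument would then give, for $\mu_{\mathscr{L}}$--a.e.\ $\Lambda$, that $g_{t}M(\Lambda,R)$ equidistributes for $\mu_{\mathcal{M}}$ and in particular is Oseledets regular, so the argument above applies at $M(\Lambda,R)$ itself. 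An alternative I would try is to first prove the statement for $(\mu_{\mathscr{L}}\otimes\mathrm{Haar}_{\SL(2,\R)})$--a.e.\ pair $(\Lambda,g)$ — using that $\SL(2,\R)\cdot\{M(\Lambda,R):\Lambda\}$ is a conull open subset of $\mathcal{M}$ — and then to descend from a.e.\ $g$ to $g=\id$ by a Hopf-type argument based on the invariance of the Oseledets regular set under the geodesic and stable horocycle flows. Making one of these equidistribution/descent steps precise is, to my mind, the only genuinely delicate point; the remainder is the bookkeeping assembled above.
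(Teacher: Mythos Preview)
Your first three paragraphs are essentially the paper's argument: the Prym splitting $H_1(M,\R)=V^\perp\oplus V$, the identification $\R\gamma_1+\R\gamma_2=V$, Bainbridge's nonzero second exponent, and the bounded deviation phenomenon (packaged in the paper as Theorem~\ref{thm:existbound}) combine to give exactly Lemma~\ref{lem:existbound}, i.e.\ that the desired $\xi$ and $C$ exist for $\nu_{\mathcal{M}}$-a.e.\ surface in $\mathcal{M}$. One small slip: the second Lyapunov exponent on $\mathcal{M}_2(1,1)$ is $\tfrac12$, not $\tfrac13$ (the latter is the value for $\mathcal{H}(2)$); this is immaterial since only positivity is used.

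Where you diverge from the paper is the transfer from $\nu_{\mathcal{M}}$-a.e.\ to $\mu_{\mathscr{L}}$-a.e., which you flag as ``the only genuinely delicate point'' and propose to handle by equidistribution of individual Teichm\"uller geodesics or a Hopf-type descent. Neither is needed, and the paper's argument is much shorter. The observation you are missing is that the \emph{property itself} --- existence of $\xi\in V$ and $C>0$ with $|\langle\sigma^\omega_t(x),\xi\rangle|\le C$ for all $x,t$ --- is invariant under the full Borel subgroup $\{h_sg_t:s,t\in\R\}$, because by \eqref{eq:changesigma} one has $\sigma_T^{h_sg_t\cdot\omega}(x)=\sigma_{e^tT}^{\omega}(x)$ (the vertical foliation is preserved by $h_s$ and only reparametrised by $g_t$). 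Now the map $\Upsilon:\mathscr{L}\times\R^2\to\mathcal{M}$, $\Upsilon(\Lambda,t,s)=(h_sg_t)\cdot M(\Lambda,R)$, is a local diffeomorphism onto the open set of surfaces in $\mathcal{M}$ with non-vertical slit, and pushes $\mu_{\mathscr{L}}\otimes\Leb_{\R^2}$ to a measure equivalent to $\nu_{\mathcal{M}}$. If \eqref{asstion-thm} failed on a set $\mathscr{L}_0\subset\mathscr{L}$ of positive $\mu_{\mathscr{L}}$-measure, then by the invariance just noted it would fail on all of $\Upsilon(\mathscr{L}_0\times[0,1]^2)$, a set of positive $\nu_{\mathcal{M}}$-measure, contradicting Lemma~\ref{lem:existbound}. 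No equidistribution and no Hopf argument are required: the two Borel parameters $(t,s)$ exactly account for the two codimensions of $\{M(\Lambda,R):\Lambda\in\mathscr{L}\}$ inside $\mathcal{M}$, and invariance of the target property replaces any dynamical ``descent from a.e.\ $g$ to $g=\id$''. Your second alternative was moving in this direction but still planned machinery that the simple invariance of \eqref{asstion-thm} makes unnecessary.
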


\begin{theorem}\label{thm:existband}
Suppose that $R>0$ and
$\Lambda\in\mathscr{L}$ satisfy \eqref{cond:adm} and $M(\Lambda,R)=(M,\omega)$.
Further assume that there is a non-zero homology class
$\xi\in\R\gamma_1+\R\gamma_2$ and $C>0$ such that
\[|\langle\sigma^\omega_t(x),\xi\rangle|\leq C\text{ for every }x\in
M^+_\omega\text{ and }t>0.\] If the surface $M(\Lambda,R)$ has no vertical saddle connection, i.e.\  there is no  vertical orbit segment that connect
singular points, then there exists $\overline{C}>0$ such  that every vertical orbit on $F(\Lambda,R)$ is trapped in an infinite band in direction
$\bar{v}(\Lambda,\xi)$.  Furthermore the width of that band is bounded by $\overline{C}$.
\end{theorem}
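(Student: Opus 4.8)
\emph{Proof proposal for Theorem~\ref{thm:existband}.} The plan is to push the problem to the $\Z^2$-cover $\widetilde M(\Lambda,R)$. By the construction in Section~\ref{sec:red}, every vertical light ray in $F(\Lambda,R)$ is the image under $\zeta$ of an orbit of the vertical translation flow $(\varphi^v_t)$ on $\widetilde M(\Lambda,R)$, and by \eqref{def:actzety} the deck action of $(m,n)\in\Z^2$ corresponds to translating $F(\Lambda,R)\subset\R^2$ by the lattice vector $m\gamma_++n\gamma_-$. Since $M(\Lambda,R)$ has no vertical saddle connection, each vertical orbit is either bi-infinite or a half-infinite separatrix; it therefore suffices to bound, uniformly, the displacement of an arbitrary orbit segment transverse to the direction $\bar v(\Lambda,\xi)$, the separatrix case then following by approximating with nearby regular orbits.

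The first step is a displacement formula. Fix a vertical orbit; for $s<t$ with $p(\varphi^v_s(\widetilde x))\in M^+_\omega$ write $\sigma_{s,t}:=\sigma^\omega_{t-s}(\varphi^v_s(\widetilde x))$. Writing $\varphi^v_t(\widetilde x)$ and $\varphi^v_s(\widetilde x)$ in the form $n\cdot\widetilde y$ with $n\in\Z^2$, $\widetilde y\in D$, and invoking Remark~\ref{rem:cover} with $\gamma=(\gamma_2,-\gamma_1)$ together with the fact that the closing curve in the definition of $\sigma_{s,t}$ avoids $p^{-1}(\partial D)$ and hence lifts inside a single translate of $D$, one obtains that $\zeta(\varphi^v_t(\widetilde x))-\zeta(\varphi^v_s(\widetilde x))$ equals
\[
\langle\sigma_{s,t},\gamma_2\rangle\,\gamma_+-\langle\sigma_{s,t},\gamma_1\rangle\,\gamma_-
\]
up to an error of norm at most $2\operatorname{diam}P(\gamma_+,\gamma_-)$, coming from the boundedness of $D$ and from $\zeta(D)=P(\gamma_+,\gamma_-)$; here one uses \eqref{def:actzety} and the isomorphism $(m,n)\mapsto m\gamma_++n\gamma_-$ to convert the deck displacement into a vector of $\Lambda\subset\R^2$.

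The second step is linear algebra. Write $\xi=a\gamma_1+b\gamma_2$ (possible since $\xi\in\R\gamma_1+\R\gamma_2$) and define the linear functional $\ell\colon\R^2\to\R$ by $\ell(\gamma_+)=b$ and $\ell(\gamma_-)=-a$. By bilinearity of the intersection form, $\ell$ applied to the displacement above equals $b\langle\sigma_{s,t},\gamma_2\rangle+a\langle\sigma_{s,t},\gamma_1\rangle=\langle\sigma_{s,t},\xi\rangle$, so by the hypothesis $|\ell(\zeta(\varphi^v_t(\widetilde x)))-\ell(\zeta(\varphi^v_s(\widetilde x)))|\le C+2\operatorname{diam}P(\gamma_+,\gamma_-)=:\overline C$ for all admissible $s<t$. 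Fixing $Q:=\varphi^v_{s_0}(\widetilde x)$ as a base point and letting $\varphi^v_t(\widetilde x)$ range over the whole orbit (for $t<s_0$ one simply exchanges the roles of the two points, using that on a bi-infinite orbit every point projects into $M^+_\omega$), the entire light ray lies in $\{z\in\R^2:|\ell(z)-\ell(\zeta(Q))|\le\overline C\}$. Now $\ell\ne 0$ since $(a,b)\ne 0$, and $\ell(\bar v(\Lambda,\xi))=b\langle\gamma_2,\xi\rangle+a\langle\gamma_1,\xi\rangle=\langle\xi,\xi\rangle=0$ because the intersection pairing is alternating, while $\bar v(\Lambda,\xi)\ne 0$ because $\gamma_+,\gamma_-$ is a basis of $\R^2$ and $\langle\gamma_1,\gamma_2\rangle\ne 0$ (as is clear from the description of $M(\Lambda,R)$ as a union of two slit tori). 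Hence $\ker\ell=\R\,\bar v(\Lambda,\xi)$, so the set above is an infinite band in direction $\bar v(\Lambda,\xi)$ of width $2\overline C/\|\ell\|$ containing the whole light ray, and this width bound is independent of the orbit; a half-infinite separatrix ray is handled by applying the same estimate at a regular point $\varphi^v_\epsilon(\widetilde x)$ just past the singular end and letting $\epsilon\to 0$.

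I expect the displacement formula to be the main obstacle: one must track precisely how the short closing curve in the definition of $\sigma_{s,t}$ lifts to $\widetilde M(\Lambda,R)$ and verify it stays within a single translate of the chosen fundamental domain $D$, so that the deck displacement of the orbit segment is literally the pair of intersection numbers with $\gamma_2$ and $\gamma_1$ up to a bounded, domain-sized error. Everything else — the passage to the cover from Section~\ref{sec:red}, the definition of $\ell$, and the invocation of the uniform bound supplied by Theorem~\ref{thm:existxi} — is routine once this is in place.
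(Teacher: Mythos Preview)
Your proof is correct and follows the paper's strategy: your displacement formula is precisely Lemma~\ref{lem:posorb} combined with \eqref{eq:twoonm} and \eqref{eq:gamdecomp}, and your functional $\ell$ with $\ell(\bar v(\Lambda,\xi))=\langle\xi,\xi\rangle=0$ is a clean repackaging of the paper's explicit distance computation. The one noteworthy difference is the treatment of negative time. The paper builds an involution $\rho$ on $\widetilde M(\Lambda,R)$ satisfying $\zeta\circ\rho=\zeta$ and $\rho^*\omega=-\omega$, so that the backward orbit of $\widetilde x$ has the same $\zeta$-image as the forward orbit of $\rho\widetilde x$, and then invokes the hypothesis at $p(\rho\widetilde x)\in M^+_\omega$; your observation that on a bi-infinite orbit every point already projects into $M^+_\omega$, so one may simply swap the roles of the two endpoints and apply the hypothesis directly, is a legitimate shortcut that bypasses $\rho$ altogether. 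The singular (separatrix) case is treated more explicitly in the paper via \eqref{eq:Oirreg}, but your approximation argument also works since the constant $\overline C$ is uniform over all orbits.
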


Since condition \eqref{cond:adm} is satisfied
for every $R>0$ and $\mu_{\mathscr{L}}$-a.e.\  lattice
$\Lambda\in\mathscr{L}$
and $M(\Lambda,R)$ has no vertical saddle connection
(see Remark~\ref{rem:saddle}), Theorem~\ref{thm:mainflat} is an
obvious consequence of the above two theorems.

\medskip

In the remainder of this section we will prove
Theorem~\ref{thm:existband}.  The proof will be preceded by a
series of useful observations. We shall postpone the proof of
Theorem~\ref{thm:existxi}  until Section~\ref{Teich:sec}.

\medskip

Let $(M,\omega)$ be a compact translation surface and let  m$(\widetilde{M}_\gamma,\widetilde{\omega}_\gamma)$ be its $\Z^d$-cover given by
$\gamma=(\gamma_1,\ldots,\gamma_d)\in H_1(M,\Z)^d$. Denote by $(\widetilde{\varphi}^{v}_t)_{t\in\R}$ the vertical flow on the $\Z^d$-cover
$(\widetilde{M}_\gamma,\widetilde{\omega}_\gamma)$.
Let us consider  the map
$\bar{m}:\widetilde{M}_\gamma\to\Z^d$  such that
$\bar{m}=\bar{m}(\widetilde{x})\in\Z^d$ is the unique element with
$\widetilde{x}\in \bar{m}\cdot D$.
\begin{lemma}\label{lem:posorb}
Let $\widetilde{x}\in \widetilde{M}_\gamma$ and $x=p(\widetilde{x})\in M^+_\omega$. Let $\xi\in H_1(M,\R)$ be an element such that
$\xi=\sum_{i=1}^da_i\gamma_i$. Then for every $t>0$ we have
\[(a_1,\ldots,a_d)\cdot
(\bar{m}(\widetilde{\varphi}^{v}_t\widetilde{x})-\bar{m}(\widetilde{x}))=\langle\sigma^\omega_t(x),\xi\rangle.\]
\end{lemma}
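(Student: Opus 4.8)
The plan is to track how the fundamental-domain index changes along a vertical orbit segment and to reconcile that count with the intersection-number description of $\sigma^\omega_t(x)$. First I would fix the lift $\widetilde{x}$ and consider the vertical orbit segment $\widetilde{\beta}:[0,t]\to\widetilde{M}_\gamma$, $\widetilde{\beta}(s)=\widetilde{\varphi}^v_s\widetilde{x}$, together with its projection $\beta=p\circ\widetilde{\beta}$, which is a vertical orbit segment on $M$ from $x$ to $\varphi^v_t(x)$ (well defined since $x\in M^+_\omega$). The loop defining $\sigma^\omega_t(x)$ is $\beta$ closed up by the shortest return curve $\delta$ from $\varphi^v_t(x)$ back to $x$ that avoids $p(\partial D)$; call this loop $c$, so $\sigma^\omega_t(x)=[c]$ and $\langle\sigma^\omega_t(x),\xi\rangle=\sum_i a_i\langle [c],\gamma_i\rangle$. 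By Remark~\ref{rem:cover} (applied with $d$ replaced by $1$ for each $\gamma_i$, or directly in the stated $\Z^d$-form), a lift of the loop $c$ in $\widetilde{M}_\gamma$ has its endpoint translated from its starting point by exactly the vector $\big(\langle[c],\gamma_1\rangle,\ldots,\langle[c],\gamma_d\rangle\big)\in\Z^d$ under the $\Z^d$-action.

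Next I would assemble such a lift of $c$ out of pieces whose endpoint-displacements I control. Lift $\beta$ to $\widetilde{\beta}$ as above; its endpoints are $\widetilde{x}$ and $\widetilde{\varphi}^v_t\widetilde{x}$. Now lift the return curve $\delta$ starting at $\widetilde{\varphi}^v_t\widetilde{x}$: since $\delta$ does not meet $p(\partial D)$, its lift $\widetilde{\delta}$ stays inside a single translate of the fundamental domain, namely $\bar m(\widetilde{\varphi}^v_t\widetilde{x})\cdot D$; hence the endpoint of $\widetilde{\delta}$ lies in that same translate $\bar m(\widetilde{\varphi}^v_t\widetilde{x})\cdot D$. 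But the endpoint of $\widetilde{\delta}$ projects to $x=p(\widetilde{x})$, and $\widetilde{x}\in\bar m(\widetilde{x})\cdot D$, so the two points $\widetilde{x}$ and the endpoint of $\widetilde{\delta}$ are two lifts of $x$ lying in the translates $\bar m(\widetilde{x})\cdot D$ and $\bar m(\widetilde{\varphi}^v_t\widetilde{x})\cdot D$ respectively; consequently the endpoint of $\widetilde{\delta}$ equals $\big(\bar m(\widetilde{\varphi}^v_t\widetilde{x})-\bar m(\widetilde{x})\big)\cdot\widetilde{x}$. Concatenating $\widetilde{\beta}$ with $\widetilde{\delta}$ gives a lift of the loop $c$ whose endpoint is $\big(\bar m(\widetilde{\varphi}^v_t\widetilde{x})-\bar m(\widetilde{x})\big)\cdot\widetilde{x}$, i.e.\ the $\Z^d$-displacement of this lift of $c$ is $\bar m(\widetilde{\varphi}^v_t\widetilde{x})-\bar m(\widetilde{x})$.

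Comparing the two computations of the displacement of a lift of $c$ yields $\bar m(\widetilde{\varphi}^v_t\widetilde{x})-\bar m(\widetilde{x})=\big(\langle[c],\gamma_1\rangle,\ldots,\langle[c],\gamma_d\rangle\big)$, and pairing both sides with $(a_1,\ldots,a_d)$ gives
\[
(a_1,\ldots,a_d)\cdot\big(\bar m(\widetilde{\varphi}^v_t\widetilde{x})-\bar m(\widetilde{x})\big)=\sum_{i=1}^d a_i\langle[c],\gamma_i\rangle=\Big\langle[c],\sum_{i=1}^d a_i\gamma_i\Big\rangle=\langle\sigma^\omega_t(x),\xi\rangle,
\]
which is the claim. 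The one point needing a little care — and the place I expect the main (though modest) obstacle — is the well-definedness of the concatenation as an honest loop lift: one must check that the closing curve $\delta$ can be chosen to meet neither $p(\partial D)$ nor the singular set in a way that would spoil either the homology class of $c$ or the "stays in one domain translate" property of $\widetilde{\delta}$. This is handled exactly as in the Notation paragraph preceding the lemma, where $\sigma^\omega_t(x)$ is defined via the shortest curve joining $\varphi^v_t(x)$ to $x$ that does not cross $p^{-1}(\partial D)$; its lift therefore automatically remains in a single translate of $D$, and its homology class is the one used to define $\sigma^\omega_t(x)$, so no additional argument is required beyond invoking Remark~\ref{rem:cover}.
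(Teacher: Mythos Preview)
Your argument is correct and is essentially the same as the paper's: both construct a lift of the closed loop $c$ in $\widetilde{M}_\gamma$ (vertical segment followed by a closing arc confined to a single translate of $D$), identify its endpoint as $(\bar m(\widetilde{\varphi}^v_t\widetilde{x})-\bar m(\widetilde{x}))\cdot\widetilde{x}$, and compare with the displacement $(\langle[c],\gamma_1\rangle,\ldots,\langle[c],\gamma_d\rangle)$ given by Remark~\ref{rem:cover}, then pair with $(a_1,\ldots,a_d)$. The only cosmetic difference is that the paper builds the curve upstairs and projects, while you build it downstairs and lift piecewise.
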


\begin{proof}
Let
$\bar{m}=\bar{m}(\widetilde{\varphi}^{v}_t\widetilde{x})-\bar{m}(\widetilde{x})$.
Then both
$\widetilde{\varphi}^{v}_t\widetilde{x},\ \bar{m}\cdot\widetilde{x}\in
(\bar{m}(\widetilde{x})+\bar{m})\cdot D$. Let us consider the
curve $\sigma_t(\widetilde{x})$ in $\widetilde{M}_\gamma$ which is
formed by the segment of the vertical orbit of $\widetilde{x}$
from $\widetilde{x}$
 to
$\widetilde{\varphi}^v_t(\widetilde{x})$ together with the shortest  curve in $(\bar{m}(\widetilde{x})+\bar{m})\cdot D$ joining
$\widetilde{\varphi}^v_t(\widetilde{x})$ with $\bar{m}\cdot\widetilde{x}$. By definition, $\sigma^\omega_t(x)=[p\circ \sigma_t(\widetilde{x})]$. Since
$(\gamma_1,\ldots,\gamma_d)$ determines the cover, it follows that the beginning $\widetilde{x}$  and the end $\bar{m}\cdot\widetilde{x}$ of the curve
$\sigma_t(\widetilde{x})$ satisfy
\[\bar{m}\cdot\widetilde{x}=(\langle \sigma^\omega_t(x),\gamma_1 \rangle,\ldots,\langle \sigma^\omega_t(x),\gamma_d\rangle )\cdot\widetilde{x}.\]
Since the $\Z^d$-action on $\widetilde{M}_\gamma$ is free, it follows that
\[\bar{m}=(\langle \sigma^\omega_t(x),\gamma_1 \rangle,\ldots,\langle \sigma^\omega_t(x),\gamma_d\rangle ).\]
Consequently,
\[(a_1,\ldots,a_d)\cdot
\bar{m}=\langle\sigma^\omega_t(x),\sum_{i=1}^da_i\gamma_i\rangle=\langle\sigma^\omega_t(x),\xi\rangle.\]
\end{proof}

Let us consider  the functions $\widetilde{m}:\widetilde{M}(\Lambda,R)\to\Z^2$ and $\widehat{m}:F(\Lambda,R)\to\Z^2$ such that
$\widetilde{m}(\widetilde{x}), \widehat{m}(\widehat{x})\in\Z^2$ are the unique elements with $\widetilde{x}\in \widetilde{m}(\widetilde{x})\cdot D$ and
$\widehat{x}\in \widehat{m}(\widehat{x})\cdot P(\gamma_+,\gamma_-)$ for all $\widetilde{x}\in \widetilde{M}(\Lambda,R)$ and $\widehat{x}\in F(\Lambda,R)$.
In view of \eqref{def:domainD} and \eqref{def:actzety}, we have
\begin{equation}\label{eq:twoonm}
\widehat{m}(\zeta\widetilde{x})=\widetilde{m}(\widetilde{x})
\quad\text{ for every }\quad
\widetilde{x}\in\widetilde{M}(\Lambda,R)
\end{equation}
and, by definition, for every $\widehat{x}\in F(\Lambda,R)$ we have
\begin{equation}\label{eq:gamdecomp}
\widehat{x}=\widehat{m}_1(\widehat{x})\gamma_++\widehat{m}_2(\widehat{x})\gamma_-+\gamma_+y_1+\gamma_-y_2
\quad\text{ for some }y_1,y_2\in[-1/2,1/2).
\end{equation}

\begin{remark}
For every $\widehat{x}\in F(\Lambda,R)$ let
$\mathcal{O}(\widehat{x})$ be the light ray orbit passing through
$\widehat{x}$. If $\mathcal{O}(\widehat{x})$  is a \emph{regular
orbit}, i.e.\ $\mathcal{O}(\widehat{x})$ does not pass through the
ends of any slit in $F(\Lambda,R)$ then, by the definition of the
light ray flow $(\widetilde{\varphi}_t)_{t\in\R}$ on
$\widetilde{M}(\Lambda,R)$, we have
\begin{equation}\label{eq:Oreg}
\mathcal{O}(\widehat{x})=\zeta\{\widetilde{\varphi}_t\widetilde{x}:t\in\R\}\text{
for any }\widetilde{x}\in\widetilde{M}(\Lambda,R)\text{ with
}\zeta\widetilde{x}=\widehat{x}.
\end{equation}

Now suppose that an orbit $\mathcal{O}(\widehat{x})$ passes
through the ends of slits once and let $\widehat{x}$ be such end.
For small $\vep>0$ let
$\widehat{x}_{-\vep}:=\widehat{x}-(0,\vep)\in F(\Lambda,R)$ and
$\widehat{x}_{\vep}:=\widehat{x}+(0,\vep)\in F(\Lambda,R)$. Next
choose $\widetilde{x}_{-\vep},\widetilde{x}_{\vep}\in
F_+(\Lambda,R)\subset M(\Lambda,R)$ such that
$\zeta\widetilde{x}_{-\vep}=\widehat{x}_{-\vep}$ and
$\zeta\widetilde{x}_{\vep}=\widehat{x}_{\vep}$. Then
\begin{equation}\label{eq:Oirreg}
\mathcal{O}(\widehat{x})=\bigcup_{\vep>0}\zeta\{\widetilde{\varphi}_t\widetilde{x}_\vep:t\geq
0\}\cup
\zeta\{\widetilde{\varphi}_{-t}\widetilde{x}_{-\vep}:t\geq
0\}\cup\{\widehat{x}\}.
\end{equation}
\end{remark}

Let us consider the map $\rho:\widetilde{M}(\Lambda,R)\to\widetilde{M}(\Lambda,R)$ defined by
\[\rho( \widetilde{x})=
  \begin{cases}
   r_\pi\circ \zeta_-^{-1}\circ \zeta_+(\widetilde{x}) & \text{ if }\widetilde{x}\in F_+(\Lambda,R), \\
   \zeta_+^{-1}\circ \zeta_-\circ r_\pi^{-1}(\widetilde{x}) & \text{ if }\widetilde{x}\in r_\pi F_-(\Lambda,R).
  \end{cases}
\]
\begin{lemma}
The map $\rho:\widetilde{M}(\Lambda,R)\to\widetilde{M}(\Lambda,R)$ is an involution
with the properties
\begin{equation}\label{lemeq:rho}
\zeta\circ\rho=\zeta\quad\text{ and }\quad\rho^*(\omega)=-\omega.
\end{equation}
\end{lemma}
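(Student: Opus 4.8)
The plan is to unravel the definitions of $\zeta$, $\zeta_\pm$, $r_\pi$ and the gluing rules, and to check the two asserted properties separately. First I would verify that $\rho$ is well-defined as a self-map of $\widetilde M(\Lambda,R)$: on $F_+(\Lambda,R)$ the map $r_\pi\circ\zeta_-^{-1}\circ\zeta_+$ sends a point of $F_+$ to $r_\pi F_-(\Lambda,R)$, and on $r_\pi F_-(\Lambda,R)$ the map $\zeta_+^{-1}\circ\zeta_-\circ r_\pi^{-1}$ sends a point back into $F_+(\Lambda,R)$, so the two cases are genuinely interchanged and the definition is consistent on the overlap (the slits, where $r_\pi$ fixes the relevant centers and the gluing identifies the top of the $(m,n)$-th slit of $F_+$ with the bottom of the $(m,n)$-th slit of $r_\pi F_-$). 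I would note that $\rho$ simply swaps the two ``sheets'' $F_+$ and $r_\pi F_-$ of $\widetilde M(\Lambda,R)$ via the canonical identifications, so it is the deck transformation of the double cover $\zeta$.

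Next, the involution property: composing the $F_+\to r_\pi F_-$ branch with the $r_\pi F_-\to F_+$ branch gives $(\zeta_+^{-1}\circ\zeta_-\circ r_\pi^{-1})\circ(r_\pi\circ\zeta_-^{-1}\circ\zeta_+)=\zeta_+^{-1}\circ\zeta_+=\mathrm{id}$ on $F_+(\Lambda,R)$, and symmetrically on $r_\pi F_-(\Lambda,R)$; hence $\rho^2=\mathrm{id}$. The identity $\zeta\circ\rho=\zeta$ is then immediate from \eqref{def:zeta}: on $F_+$ we have $\zeta\circ\rho=\zeta\circ r_\pi\circ\zeta_-^{-1}\circ\zeta_+=(\zeta_-\circ r_\pi^{-1})\circ r_\pi\circ\zeta_-^{-1}\circ\zeta_+=\zeta_+$, which agrees with $\zeta$ on $F_+$, and the computation on $r_\pi F_-$ is the mirror image.

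For the differential-form statement $\rho^*\omega=-\omega$, I would use that the translation structure $\widetilde\omega$ is given by $dz$ on $F_+(\Lambda,R)$ and (after rotation) also by $dz$ on $r_\pi F_-(\Lambda,R)$, that $\zeta_\pm$ are translation identifications (hence pull $dz$ back to $dz$), and that the rotation $r_\pi$ by $\pi$ satisfies $r_\pi^*(dz)=-dz$. Therefore on $F_+$: $\rho^*\omega=(r_\pi\circ\zeta_-^{-1}\circ\zeta_+)^*(dz)=\zeta_+^*(\zeta_-^{-1})^*r_\pi^*(dz)=\zeta_+^*(\zeta_-^{-1})^*(-dz)=-dz=-\omega$, and the analogous chain on $r_\pi F_-(\Lambda,R)$ again produces the sign $-1$ from the single occurrence of $r_\pi$ (or $r_\pi^{-1}$). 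Patching the two local computations together along the slits gives $\rho^*\omega=-\omega$ globally. The only mildly delicate point — and the step I expect to require the most care — is checking compatibility on the gluing locus (the slits and their endpoints, where $\widetilde\omega$ has its simple zeros): one must confirm that the top/bottom identifications used to build $\widetilde M(\Lambda,R)$ are exactly the ones that make the two branch formulas agree there, so that $\rho$ is a genuine homeomorphism (indeed a translation-structure isometry up to the sign) rather than merely a piecewise map; this is a matter of carefully reading off Figures~\ref{lrf1} and~\ref{lrf2}, but it is where a sign error would hide.
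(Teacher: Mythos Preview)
Your proof is correct and follows essentially the same approach as the paper's: both verify the involution property and $\zeta\circ\rho=\zeta$ directly from the piecewise definitions of $\rho$ and $\zeta$, and both obtain $\rho^*\omega=-\omega$ by observing that $\zeta_\pm$ (and hence $\zeta_-^{-1}\circ\zeta_+$) preserve $dz$ while the single occurrence of $r_\pi$ contributes the sign. Your write-up is simply more explicit---you spell out the composition $\rho^2=\mathrm{id}$ and flag the compatibility check along the slits---whereas the paper handles these points in a sentence each.
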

\begin{proof}
By definition, $\rho$ is an involution that maps $F_+(\Lambda,R)$
on  $r_\pi F_-(\Lambda,R)$ and $r_\pi F_-(\Lambda,R)$ on
$F_+(\Lambda,R)$. Comparing this with the definition of $\zeta$
(see \eqref{def:zeta}) immediately gives $\zeta\circ\rho=\zeta$.

Recall that the form $\omega$ is given by $dz$ on both
$F_+(\Lambda,R)$  and $r_\pi F_-(\Lambda,R)$. The map
$\zeta_-^{-1}\circ \zeta_+:F_+(\Lambda,R)\to F_-(\Lambda,R)$ is a
bijection that sends $dz$ to $dz$. Moreover, the rotation
$r_\pi:F_-(\Lambda,R)\to r_\pi F_-(\Lambda,R)$ sends $dz$ to
$-dz$. Since $\rho$ is the composition (on pieces) the above maps
or their inverses, it follows that  $\rho^*(\omega)=-\omega$.
\end{proof}

Note that, by the definition of directional flows,
$\rho^*(\omega)=-\omega$  immediately implies
\begin{equation}\label{eq:revers}
\rho\circ\widetilde{\varphi}^v_t(\widetilde{x})=\widetilde{\varphi}^v_{-t}\circ\rho(\widetilde{x})
\end{equation}
for all $\widetilde{x}\in \widetilde{M}(\Lambda,R)$ and $t\in \R$ for  which both sides of \eqref{eq:revers} are well defined.



\begin{proof}[Proof of Theorem~\ref{thm:existband}]
Let $\mathcal{O}$ be a trajectory of light in $F(\Lambda,R)$.
Since $M(\Lambda,R)$ has no vertical saddle connection,
$\mathcal{O}$ passes through the ends of a slit in $F(\Lambda,R)$
at most once.


Suppose first that $\mathcal{O}=\mathcal{O}(\widehat{x})$ is a
regular  orbit for some $\widehat{x}\in F(\Lambda,R)$. Let
$\widetilde{x}\in M(\Lambda,R)$ be such that
$\zeta\widetilde{x}=\widehat{x}\in\mathcal{O}$. In view of
\eqref{eq:Oreg}, \eqref{eq:revers} and \eqref{lemeq:rho},
\[\mathcal{O}(\widehat{x})=\zeta\{\widetilde{\varphi}^v_t\widetilde{x}:t\in\R\}=\zeta\{\widetilde{\varphi}^v_t\widetilde{x}:t\geq 0\}\cup  \zeta\{\widetilde{\varphi}^v_t\rho\widetilde{x}:t\geq 0\}\]
and $p(\widetilde{x}), p(\rho\widetilde{x})\in M^+_\omega$.

As $\xi\in\R\gamma_1+\R\gamma_2$ and $\langle\gamma_1, \gamma_2\rangle=2$, we have $\xi=a\gamma_2-b\gamma_1$ with
\[a=\frac{\langle\gamma_1,\xi\rangle}{\langle\gamma_1,\gamma_2\rangle}=\frac12\langle\gamma_1,\xi\rangle\quad\text{   and }
\quad b=-\frac{\langle\gamma_2,\xi\rangle}{\langle\gamma_2,\gamma_1\rangle}=\frac12\langle\gamma_2,\xi\rangle.\] Since the pair $(\gamma_2,-\gamma_1)$
defines the cover, by Lemma~\ref{lem:posorb},
\[\big|(a,b)\cdot(\widetilde{m}(\widetilde{\varphi}^v_t\widetilde{x})-\widetilde{m}(\widetilde{x}))\big|=
|\langle\sigma^\omega_t(p(\widetilde{x})),\xi\rangle|\leq C\quad
\text{ for }\quad t\geq 0,
\]
moreover, by \eqref{eq:twoonm}, \eqref{lemeq:rho} and \eqref{eq:revers}, for $t<0$ we have
\begin{align*}\big|(a,b)\cdot(\widetilde{m}(\widetilde{\varphi}^v_t\widetilde{x})-\widetilde{m}(\widetilde{x}))\big|&=
\big|(a,b)\cdot(\widetilde{m}(\widetilde{\varphi}^v_{-t}\rho\widetilde{x})-\widetilde{m}(\rho\widetilde{x}))\big|\\&=
|\langle\sigma^\omega_{-t}(p(\rho\widetilde{x})),\xi\rangle|\leq
C.
\end{align*}
For every $t\in\R$ let
$\widehat{x}_t=\zeta\big(\widetilde{\varphi}^v_t\widetilde{x}\big)$
and
\[\bar{m}^t:=\widetilde{m}(\widetilde{\varphi}^v_t\widetilde{x})-\widetilde{m}(\widetilde{x})=\widehat{m}(\widehat{x}_t)-\widehat{m}(\widehat{x}),\]
see \eqref{eq:twoonm}. By \eqref{eq:gamdecomp}, this yields
\[\widehat{x}_t=\widehat{x}+\bar{m}_1^t\gamma_++\bar{m}_2^t\gamma_-+\gamma_+y_1+\gamma_-y_2\]
with $y_1,y_2\in[-1,1]$. Therefore
\[\widehat{x}_t-\widehat{x}-\frac{(b,-a)\cdot \bar{m}^t}{a^2+b^2}(b\gamma_+-a\gamma_-)=\frac{(a,b)\cdot\bar{m}^t}{a^2+b^2}(a\gamma_++b\gamma_-)+\gamma_+y_1+\gamma_-y_2.\]
Since $|(a,b)\cdot\bar{m}^t|\leq C$ for every $t\in\R$ and
\[b\gamma_+-a\gamma_-=\frac{1}{\langle\gamma_1,\gamma_2\rangle}(\langle\gamma_2,\xi\rangle\gamma_+-\langle\gamma_1,\xi\rangle\gamma_-)=\frac{\bar{v}(\Lambda,\xi)}{\langle\gamma_1,\gamma_2\rangle},\]
it follows that
\[\operatorname{dist}\big(\widehat{x}_t-\widehat{x},\R\, \bar{v}(\Lambda,\xi)\big)\leq\overline{C}:=\Big(\frac{C}{\sqrt{a^2+b^2}}+1\Big)(\|\gamma_+\|+\|\gamma_-\|).\]

Finally suppose that $\mathcal{O}=\mathcal{O}(\widehat{x})$ is not
regular and $\widehat{x}$  is the end of a slit.
Then, in view of \eqref{eq:Oirreg},
\eqref{eq:revers} and \eqref{lemeq:rho},
\[\mathcal{O}=\bigcup_{\vep>0}\zeta\{\widetilde{\varphi}^v_t\widetilde{x}_\vep:t\geq 0\}\cup \zeta\{\widetilde{\varphi}^v_t\rho\widetilde{x}_{-\vep}:t\geq 0\}\cup\{\widehat{x}\}\]
and $p(\widetilde{x}_{-\vep}), p(\rho\widetilde{x}_{-\vep})\in
M^+_\omega$.  Now the rest of the proof runs as in the regular
case.
\end{proof}

\section{Moduli space, Teichm\"uller flow and  Kontsevich-Zorich cocycle}\label{Teich:sec}
In this section we give a brief overview of the Teichm\"uller flow
and the Kontsevich-Zorich  cocycle. For further background
material we refer the reader to
\cite{For-coheq,For-dev,YoLN,ZoFlat}.

Given  a connected compact orientable surface $M$ denote by
$\operatorname{Diff}^+(M)$  the group of orientation-preserving
homeomorphisms of $M$. We will denote by $\mathcal{M}(M)$
($\mathcal{M}_a(M)$) the {\em moduli space} of (area $a>0$)
Abelian differentials, that is the space of orbits of the natural
action of $\operatorname{Diff}^+(M)$ on the space of (area $a>0$)
Abelian differentials on $M$. Nevertheless $\mathcal{M}_a(M)$ can be
identified with $\mathcal{M}_1(M)$, by rescaling Abelian
differentials with the factor $1/\sqrt{a}$.

The group $SL(2,\R)$ acts naturally on the space of Abelian differentials on $M$ and $\mathcal{M}(M)$ as follows: given a translation structure $\omega$,
consider the charts given by  local primitives of the holomorphic $1$-form. The new charts defined by postcomposition of these charts with an element of
$SL(2,\R)$ define a new complex structure and a new differential which is holomorphic with respect to this new complex structure, thus a new translation
structure. We denote by $g\cdot \omega$ the translation structure  on $M$ obtained acting by $g \in SL(2,\R)$ on a translation structure $\omega$ on $M$.
Since $\mathcal{M}_a(M)$ is $SL(2,\R)$-invariant, we restrict the action $SL(2,\R)$ to $\mathcal{M}_a(M)$.

Using the Iwasawa NAK decomposition, every element of $SL(2,\R)$
has a unique decomposition $h_sg_tr_\theta$, where
\begin{equation*}
h_s= \left(\begin{array}{cc} 1 & 0 \\ s &
1 \end{array}\right), \quad
g_t= \left(\begin{array}{cc} e^t & 0 \\ 0 & e^{-t}
\end{array}\right), \quad  r_\theta= \left(\begin{array}{cc} \cos
\theta & -\sin \theta \\ \sin \theta & \cos\theta
\end{array}\right).
\end{equation*}
\begin{remark}
Since the action of  $h_sg_t$ rescales the vertical vector $(0,1)$
 by the factor $e^{-t}$, the vertical flow on
$(M,h_sg_t\cdot\omega)$ coincides with the linear time change
 by the factor $e^{-t}$ of the vertical flow on
$(M,\omega)$. It follows that
\begin{equation}\label{eq:changesigma}
\sigma_T^{h_sg_t\cdot\omega}(x)=\sigma_{e^tT}^{\omega}(x).
\end{equation}
\end{remark}

The restriction of $SL(2,\R)$ on $\mathcal{M}_1(M)$ to the
diagonal subgroup $(g_t)_{t\in\R}$ is called the {\em
Teichm\"uller flow} and we will denote this flow by
$(g_t)_{t\in\R}$.

Let  $M$ be a surface of genus $g$ and let $m$ be the number of
zeros of $\omega$.  If $\alpha_i$, $1\leq i\leq m$ are degrees of
all zeros, one has $2g-2 = \sum_{i=1}^m\alpha_i$. Let us denote by
$\alpha=(\alpha_1,\ldots,\alpha_m)$ and $\mathcal{M} (\alpha)$ the
\emph{stratum} consisting of all $(M, \omega)$ such that $\omega$
has $m$ zeros of degrees $\alpha_1, \ldots, \alpha_m$. Then the
normalized stratum $\mathcal{M}_1 (\alpha) = \mathcal{M} (\alpha)
\cap \mathcal{M}_1(M)$ is also $SL(2,\R)$-invariant.

 The {\em Kontsevich-Zorich cocycle}
$(G^{KZ}_t)_{t\in\R}$ is the quotient of the trivial action
$(g_t\times Id)_{t\in\R}$ on the product of the space of area one
Abelian differentials on $M$ with $H_1(M,\R)$ by the action of the
group $\operatorname{Diff}^+(M)$.  Elements of
$\operatorname{Diff}^+(M)$ act on the fiber $H_1(M,\R)$ by induced
maps. The cocycle $(G^{KZ}_t)_{t\in\R}$ acts on the homology
vector bundle $\mathcal{H}_1(M,\R)$
 over the Teichm\"uller flow $(g_t)_{t \in \R}$ on the moduli space $\mathcal{M}_1(M)$.

Clearly the fibers of the  bundle $\mathcal{H}_1(M,\R)$ can be
identified with $H_1(M,\R)$.  The algebraic intersection
number furnishes $H_1(M,\R)$ with a symplectic structure.
This symplectic structure is invariant under the action of
the mapping-class group and hence invariant under the action of
$SL(2,\R)$.

The standard definition of the KZ-cocycle is based on the cohomological
bundle $\mathcal{H}^1(M,\R)$. Each fiber of this bundle (identified with
${H}^1(M,\R)$) is endowed with a natural norm, called the
\emph{Hodge norm}, see for example \cite{For-dev} for definition.
The identification of the homological and cohomological
bundle and the corresponding KZ-cocycles is established by
Poincar\'e duality $\mathcal{P}:H_1(M,\R)\to H^1(M,\R)$.
Via Poincar\'e duality, the Hodge norms induce norms on the fibers of
$\mathcal{H}_1(M,\R)$. The norm on the fiber $H_1(M,\R)$
over $\omega\in\mathcal{M}(M)$ is denoted by
$\|\,\cdot\,\|_\omega$ and will be called \emph{Hodge norm} as well.

Let $\nu$ be a probability measure on $\mathcal{M}_1(M)$ which  is
$(g_t)_{t\in\R}$-invariant and ergodic. Suppose that
$\mathcal{M}_{\nu}\subset\mathcal{M}_1(M)$ is a
$(g_t)_{t\in\R}$-invariant set with full $\nu$-measure and
$V\subset H_1(M,\R)$ is a symplectic subspace, i.e.\ the symplectic form
restricted to $V$ is non-degenerate. Moreover,
assume that $V$ is invariant for the action induced from
$(g_t)_{t\in\R}$-action on $\mathcal{M}_\nu$. Then $V$ defines a
subbundle, denoted by $\mathcal{V}$, of the bundle
$\mathcal{H}_1(M,\R)$ over $\mathcal{M}_\nu$ for which the fibers
are identified with $V$.

Let us consider the KZ-cocycle $(G_t^{\mathcal{V}})_{t\in\R}$
restricted to $V$. Since the measure $\nu$ is ergodic, by
Oseledets'  theorem, there exists Lyapunov exponents of
$(G_t^{\mathcal{V}})_{t\in\R}$ with respect to the measure $\nu$.
As the action of the Kontsevich-Zorich cocycle is symplectic, its
Lyapunov exponents with respect to the measure $\nu$ are:
\[\lambda^{\mathcal{V}}_1>\lambda^{\mathcal{V}}_2>\ldots>\lambda^{\mathcal{V}}_s\geq-
\lambda^{\mathcal{V}}_s>\ldots>-\lambda^{\mathcal{V}}_2>-\lambda^{\mathcal{V}}_1\]
and for $\nu$-a.e.\ $\omega\in \mathcal{M}_\nu$ there is a splitting
$V=\bigoplus_{i=-s}^sV_i(\omega)$ (if $\lambda^{\mathcal{V}}_s=0$ then
$V_{-s}(\omega)=V_{s}(\omega)$) such that for
any $\xi\in V_i(\omega)$ we have
\begin{equation*}
\lim_{t\to\infty}\frac{1}{t}\log\|\xi\|_{g_t\omega}=\lambda^{\mathcal{V}}_i.
\end{equation*}
It follows that $V$ has a direct splitting
\[ V=E_{\omega}^+\oplus E_{\omega}^0\oplus E_{\omega}^-\quad\text{ for $\nu$-a.e.\ } \omega\in \mathcal{M}_\nu,\]
into unstable, central and stable subspaces
\begin{align*}
E_{\omega}^+&=\Big\{\xi\in V:\lim_{t\to+\infty}\frac{1}{t}\log\|\xi\|_{g_{-t}\omega}<0\Big\},
\label{stabledef}\\
E_{\omega}^0&=\Big\{\xi\in V:\lim_{t\to\infty}\frac{1}{t}\log\|\xi\|_{g_{t}\omega}=0\Big\},\nonumber
\\
E_{\omega}^-&=\Big\{\xi\in
V:\lim_{t\to+\infty}\frac{1}{t}\log\|\xi\|_{g_{t}\omega}<0\Big\}.\nonumber
\end{align*}
The dimension of the stable and unstable subspace is equal to the
number of positive Lyapunov exponents of $(G^{\mathcal{V}}_t)_{t\in\R}$.

The following theorem, crucial to the proof of
Theorem~\ref{thm:existxi},  is based on the phenomenon of bounded
deviation discovered by Zorich in his seminal papers \cite{Zor1}
and \cite{Zor2}. Its proof can be found in  \cite{DHL} (see
Theorem~2) and \cite{Fr-Ulc:nonerg} (see Theorem~4.2 - in the
cohomological setting).
\begin{theorem}\label{thm:existbound}
For $\nu$-a.e.\ $\omega\in \mathcal{M}_\nu$ there exists $C>0$
such that for every $\xi\in E_{\omega}^-$, $x\in M^+_\omega$ and
$t>0$ we have $|\langle\sigma^{\omega}_t(x),\xi\rangle|\leq
C\|\xi\|_\omega$.
\end{theorem}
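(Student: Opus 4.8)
The plan is to reduce the estimate to a single Cauchy--Schwarz inequality, carried out not on $(M,\omega)$ itself but on a surface obtained by renormalising $(M,\omega)$ along the Teichm\"uller flow up to a cleverly chosen time $\tau'$. The only inputs are: the identity \eqref{eq:changesigma}; the compatibility of the Hodge norm with the intersection pairing; the exponential decay of $\|\xi\|_{g_\tau\omega}$ along the forward orbit forced by Oseledets' theorem when $\xi\in E^-_\omega$; and the fact that, for $\nu$-a.e.\ $\omega$, the forward Teichm\"uller orbit returns to a fixed compact set with only sublinear (logarithmic) gaps.

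First I would record two elementary facts. (i) The intersection pairing $\langle\cdot,\cdot\rangle$ is purely topological, whereas on every translation surface $(M',\omega')$ the Hodge norm $\|\cdot\|_{\omega'}$ on $H_1(M,\R)$ satisfies $|\langle u,v\rangle|\le\|u\|_{\omega'}\|v\|_{\omega'}$; this follows from Poincar\'e duality together with the fact that the Hodge star is an $L^2$-isometry on harmonic $1$-forms. (ii) If $\omega'$ ranges over a compact subset $K$ of a stratum, there is $C(K)>0$ with $\operatorname{diam}(M,\omega')\le C(K)$ and $\|[c]\|_{\omega'}\le C(K)\,\operatorname{length}_{\omega'}(c)$ for every loop $c$ on $M$ (the harmonic representative $\eta_\xi$ of $\mathcal{P}\xi$ has $L^\infty$-norm bounded by $C(K)\|\xi\|_{\omega'}$ on $K$, and $\langle[c],\xi\rangle=\int_c\eta_\xi$); hence, since the loop defining $\sigma^{\omega'}_L(x)$ has length at most $L+\operatorname{diam}(M,\omega')$, we get $\|\sigma^{\omega'}_L(x)\|_{\omega'}\le C(K)(L+1)$ for all $L>0$ and $x\in M^+_{\omega'}$.

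Now fix a compact $K$ of the stratum with $\nu(K)>0$. For $\nu$-a.e.\ $\omega$ the forward orbit $(g_\tau\cdot\omega)_{\tau\ge0}$ meets $K$ for an unbounded set of times (ergodicity) and, by the logarithmic law for excursions of the Teichm\"uller flow into the cusp, the gap between a time $\tau$ and the last previous visit $\tau'(\tau):=\sup\{\sigma\le\tau:g_\sigma\cdot\omega\in K\}$ is $\tau-\tau'(\tau)=o(\tau)$; moreover, by Oseledets' theorem there are $\delta>0$ and $C_1(\omega)>0$ with $\|\xi\|_{g_\tau\omega}\le C_1(\omega)e^{-\delta\tau}\|\xi\|_\omega$ for all $\tau\ge0$ and all $\xi\in E^-_\omega$. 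Given $x\in M^+_\omega$ and $t\ge1$, put $\tau=\log t$ and $\tau'=\tau'(\tau)$ (for $t<1$, or for $\tau$ below the first visit time, bound $|\langle\sigma^\omega_t(x),\xi\rangle|\le\|\sigma^\omega_t(x)\|_\omega\|\xi\|_\omega\le C(\omega)\|\xi\|_\omega$ directly on $(M,\omega)$ using (i)--(ii)). By \eqref{eq:changesigma} we have $\sigma^\omega_t(x)=\sigma^{g_{\tau'}\cdot\omega}_{L}(x)$ with $L=e^{\tau-\tau'}$, and since the pairing is topological while $g_{\tau'}\cdot\omega\in K$,
\begin{align*}
\big|\langle\sigma^\omega_t(x),\xi\rangle\big|
&=\big|\langle\sigma^{g_{\tau'}\cdot\omega}_{L}(x),\xi\rangle\big|
\le\big\|\sigma^{g_{\tau'}\cdot\omega}_{L}(x)\big\|_{g_{\tau'}\omega}\,\big\|\xi\big\|_{g_{\tau'}\omega}\\
&\le C(K)\,(e^{\tau-\tau'}+1)\cdot C_1(\omega)\,e^{-\delta\tau'}\,\|\xi\|_\omega .
\end{align*}
Finally, $\tau-\tau'=o(\tau)=o(\tau')$ gives $\tau-(1+\delta)\tau'=o(\tau')-\delta\tau'\le C_2(\omega)$ uniformly in $\tau\ge0$, so $(e^{\tau-\tau'}+1)e^{-\delta\tau'}\le 2e^{\tau-(1+\delta)\tau'}\le 2e^{C_2(\omega)}$, and therefore $|\langle\sigma^\omega_t(x),\xi\rangle|\le C(\omega)\|\xi\|_\omega$ with $C(\omega):=2C(K)C_1(\omega)e^{C_2(\omega)}$, independent of $\xi$, $x$ and $t$.

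The only non-elementary ingredient, and hence the main obstacle, is the sublinearity (logarithmic law) of cusp excursions of the Teichm\"uller flow, hand in hand with the uniform geometric bounds over $K$; this is precisely where the bounded-deviation phenomenon of Zorich enters. A self-contained treatment---phrased alternatively in terms of Rauzy--Veech/Zorich induction, where the statement becomes the boundedness of the Birkhoff sums of the renormalisation cocycle restricted to its stable Oseledets subspace---is given in \cite{DHL} (Theorem~2) and, in the cohomological formulation, in \cite{Fr-Ulc:nonerg} (Theorem~4.2).
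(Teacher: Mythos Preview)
The paper itself gives no proof of this theorem: it simply records that the result ``can be found in \cite{DHL} (see Theorem~2) and \cite{Fr-Ulc:nonerg} (see Theorem~4.2 -- in the cohomological setting)'', and you close by citing the very same two references. What you add is a correct outline of the renormalisation argument that underlies those references: pair $\sigma^\omega_t(x)$ with $\xi$ on a renormalised surface $g_{\tau'}\cdot\omega$ lying in a fixed compact set $K$, use the linear growth $\|\sigma^{g_{\tau'}\omega}_L(\,\cdot\,)\|_{g_{\tau'}\omega}\le C(K)(L+1)$ available on $K$, and beat it with the Oseledets decay $\|\xi\|_{g_{\tau'}\omega}\le C_1(\omega)e^{-\delta\tau'}\|\xi\|_\omega$; the balance $(e^{\tau-\tau'}+1)e^{-\delta\tau'}\le\text{const}$ works because $\tau-\tau'(\tau)=o(\tau)$. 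So your proposal is consistent with, and more informative than, what the paper does.

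Two small remarks. First, the logarithmic law is not needed: the sublinearity $\tau-\tau'(\tau)=o(\tau)$ already follows from Birkhoff's ergodic theorem, since $\int_0^{\tau}\chi_K(g_s\omega)\,ds=\int_0^{\tau'(\tau)}\chi_K(g_s\omega)\,ds$ and both sides are asymptotic to $\nu(K)$ times their upper limit. Second, the step ``$\|\eta_\xi\|_{L^\infty}\le C(K)\|\xi\|_{\omega'}$, hence $\|[c]\|_{\omega'}\le C(K)\operatorname{length}_{\omega'}(c)$'' is delicate if both the length and the pointwise norm of $\eta_\xi$ are taken in the singular flat metric $|\omega'|$, because $|\eta_\xi|_{|\omega'|}$ need not stay bounded near the zeros of $\omega'$; it is cleaner either to count crossings of the orbit segment with a fixed finite system of transversals on $K$ (which gives $\|\sigma^{\omega'}_L(x)\|_{\omega'}\le C(K)(L+1)$ directly), or to work in a smooth conformal metric varying continuously over $K$.
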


\section{Branched $2$-covers of the torus and the proof of Theorem~\ref{thm:existxi}}
Let us consider the standard torus $\T^2=\R^2/\Z^2$ with two
different marked points $u_1,u_2\in\T$. Denote by $M$ the $2$-cover
of $\T^2$ branched at $u_1$ and $u_2$, see
Figure~\ref{surfcomcover}. Denote by $p:M\to\T^2$ the covering
map. Then the deck group consists of $id$ and the involution
$\tau:M\to M$ exchanging the points in fibers over
$\T^2\setminus\{u_1,u_2\}$ and fixing the points $u_1,u_2$.
\begin{figure}[h]
\includegraphics[width=0.8\textwidth]{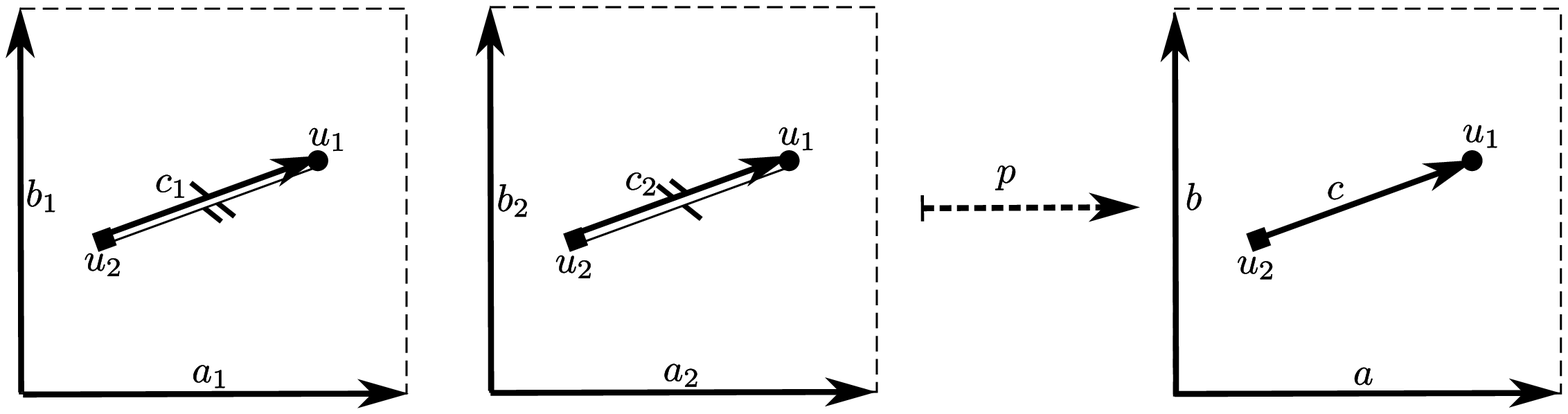}
\caption{Branched $2$-cover $p:M\to\T^2$ \label{surfcomcover}}
\end{figure}
Let us consider the set
\begin{align*}\mathcal{M}=&\{(M,\omega)\in\mathcal{M}_2(1,1):\tau^*\omega=\omega\}\\
=&\{(M,\omega)\in\mathcal{M}_2(1,1):\omega=p^*\omega_0\text{ for some }\omega_0\in\mathcal{M}_1(0,0)\}.
\end{align*}
Then $\mathcal{M}$ is an $SL(2,\R)$-invariant subset of
$\mathcal{M}_2(1,1)$  which is a $2$-cover of the stratum
$\mathcal{M}_1(0,0)$.  Therefore $\mathcal{M}$ has a natural
$SL(2,\R)$-invariant $(g_t)_{t\in\R}$-ergodic measure
$\nu_{\mathcal{M}}$ which is the pullback by the covering map of
the canonical measure on the stratum $\mathcal{M}_1(0,0)$.

Let us consider the orthogonal (symplectic) decomposition $H_1(M,\R)=V\oplus V^\perp$  with
\[V=\{\xi\in H_1(M,\R):\tau_*\xi=-\xi\}\text{ and }V^\perp=\{\xi\in H_1(M,\R):\tau_*\xi=\xi\}.\]
Then $V$ and $V^\perp$ are $2$-dimensional symplectic subspaces and  $\gamma_1:=a_1-a_2$, $\gamma_2:=b_1-b_2\in H_1(M,\Z)$ establish a basis of $V$. Since
$V=\ker p_*$ and $p$ is $SL(2,\R)$-equivariant, $V$ is invariant under the induced $SL(2,\R)$-action. Therefore, $V$ defines a subbundle over $\mathcal{M}$
and we can consider the restricted KZ cocycle $(G_t^{\mathcal{V}})_{t\in\R}$ on $\mathcal{V}$. As it was shown in \cite{Bain} by Bainbridge, the Lyapunov
exponents of every ergodic $SL(2,\R)$-invariant measure on $\mathcal{M}_2(1,1)$ are equal to $1$, $1/2$, $-1/2$, and  $-1$. It follows that the Lyapunov
exponents of $(G_t^{\mathcal{V}})_{t\in\R}$ are $1/2$ and $-1/2$. Therefore, for $\nu_\mathcal{M}$-a.e.\ $\omega\in\mathcal{M}$ the stable subspace
$E_{\omega}^-\subset V$ is one-dimensional (so non-trivial). We now apply Theorem~\ref{thm:existband} to the measure $\nu_{\mathcal{M}}$ to obtain the
following result.

\begin{lemma}\label{lem:existbound}
For $\nu_{\mathcal{M}}$-a.e.\ surface $(M,\omega)$
\begin{equation}\label{asstion-thm}
\text{there exist }\xi\in\!V\!\setminus\!\{0\}, C>0\text{  such that } |\langle\sigma^\omega_t(x),\xi\rangle|\leq C\text{ for all }x\in \!M^+_\omega, t>0.
\end{equation}
\end{lemma}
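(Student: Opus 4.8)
The plan is to deduce Lemma~\ref{lem:existbound} by specializing Theorem~\ref{thm:existbound} (the bounded deviation statement) to the $(g_t)_{t\in\R}$-invariant ergodic measure $\nu_{\mathcal{M}}$ on $\mathcal{M}$ and the subbundle $\mathcal{V}$. The data needed to run this machine are already assembled in the paragraph preceding the lemma: $V=\{\xi\in H_1(M,\R):\tau_*\xi=-\xi\}$ is a $2$-dimensional symplectic subspace of $H_1(M,\R)$, it equals $\ker p_*$ and is therefore invariant under the induced $SL(2,\R)$-action on the bundle over $\mathcal{M}$, and Bainbridge's computation \cite{Bain} gives that the Lyapunov exponents of $(G^{\mathcal{V}}_t)_{t\in\R}$ with respect to $\nu_{\mathcal{M}}$ are $1/2$ and $-1/2$. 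So the role of $\mathcal{M}_\nu$ in Section~\ref{Teich:sec} is played by $\mathcal{M}$ (more precisely a full-measure $(g_t)$-invariant subset thereof), and the role of $\nu$ by $\nu_{\mathcal{M}}$.

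First I would invoke the Oseledets splitting for $(G^{\mathcal{V}}_t)_{t\in\R}$: since there is a strictly positive exponent $1/2$ and a strictly negative exponent $-1/2$ and no zero exponent, for $\nu_{\mathcal{M}}$-a.e.\ $\omega$ the stable subspace $E^-_\omega\subset V$ is one-dimensional, hence contains a non-zero vector; pick $\xi=\xi(\omega)\in E^-_\omega\setminus\{0\}$. (Here one uses that $\dim V=2$ and the stable subspace has dimension equal to the number of positive exponents, namely one.) Then I would apply Theorem~\ref{thm:existbound} with $\nu=\nu_{\mathcal{M}}$, $\mathcal{M}_\nu$ the relevant full-measure set, and this $\xi$: it yields, for $\nu_{\mathcal{M}}$-a.e.\ $\omega$, a constant $C=C(\omega)>0$ with $|\langle\sigma^\omega_t(x),\xi\rangle|\leq C\|\xi\|_\omega$ for every $x\in M^+_\omega$ and every $t>0$. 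Absorbing the fixed finite factor $\|\xi\|_\omega$ into the constant gives exactly \eqref{asstion-thm}. The only mild bookkeeping point is to confirm that the almost-everywhere set on which Theorem~\ref{thm:existbound} applies is the same (or can be intersected with) the almost-everywhere set on which the Oseledets splitting exists with the stated exponents; both are $(g_t)$-invariant full-$\nu_{\mathcal{M}}$-measure sets, so their intersection is again full measure.

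I do not expect a genuine obstacle here: the lemma is essentially a packaging statement that connects the abstract bounded-deviation theorem to the concrete locus $\mathcal{M}$, once Bainbridge's exponent computation is taken as input. The one thing worth being careful about is the passage from the cohomological formulation of bounded deviation (Theorem~4.2 in \cite{Fr-Ulc:nonerg}, phrased via $H^1$ and the Hodge norm) to the homological statement used here; but the identification of the homological and cohomological KZ-cocycles via Poincar\'e duality $\mathcal{P}:H_1(M,\R)\to H^1(M,\R)$, and the corresponding transport of the Hodge norm, is exactly the content recalled in Section~\ref{Teich:sec}, so the statement of Theorem~\ref{thm:existbound} as quoted is already in the homological form we need. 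Thus the proof reduces to: (i) non-triviality of $E^-_\omega$ from the exponent $-1/2<0$ and $\dim V=2$; (ii) direct application of Theorem~\ref{thm:existbound}; (iii) replacing $C\|\xi\|_\omega$ by $C$.
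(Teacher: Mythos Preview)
Your proposal is correct and is exactly the argument the paper gives: the paragraph preceding the lemma verifies that $V$ is a $2$-dimensional symplectic $SL(2,\R)$-invariant subspace and that, by Bainbridge, the exponents of $(G^{\mathcal{V}}_t)$ are $\pm 1/2$, so $E^-_\omega$ is one-dimensional for $\nu_{\mathcal{M}}$-a.e.\ $\omega$; the lemma then follows by invoking Theorem~\ref{thm:existbound} and absorbing $\|\xi\|_\omega$ into $C$. (Note that the sentence in the paper immediately before the lemma says ``apply Theorem~\ref{thm:existband}'', which is a typo for Theorem~\ref{thm:existbound}; your write-up cites the correct theorem.)
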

\begin{proof}[Proof of Theorem~\ref{thm:existxi}]
We begin the proof by describing a local product structure on
$\mathcal{M}$ that will help us to deduce
Theorem~\ref{thm:existxi} directly from
Lemma~\ref{lem:existbound}.

Since $\mathcal{M} \subset\mathcal{M}_2(1,1)$ is a $2$-cover of
the stratum $\mathcal{M}_1(0,0)$, it is a $5$ dimensional
manifold. Each element $(M,\omega)$ of $\mathcal{M}$ is a union of
two identical tori glued along a slit, see
Figure~\ref{surfcomgenform}.
\begin{figure}[h]
\includegraphics[width=0.5\textwidth]{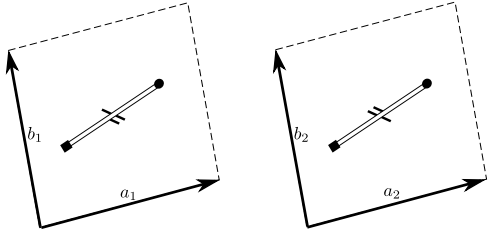}
\caption{Typical surface in $\mathcal{M}$ \label{surfcomgenform}}
\end{figure}

For fixed $R>0$ let $\Upsilon:SL(2,\R)/SL(2,\Z)\times\R^2\to
\mathcal{M}$  be the map
\[\Upsilon(\Lambda,t,s)=(h_sg_t)\cdot M(\Lambda,R).\]
This map is a local diffeomorphism whose image consists of surfaces in  $\mathcal{M}$ whose slits are not vertical. Indeed, local inverses of $\Upsilon$ are given as follows. To start represent every surface $(M,\omega)$ in $\mathcal{M}$ as the union of two identical tori glued along a slit, see Figure~\ref{surfcomgenform}. Assume that the slit is not vertical. Then transform $(M,\omega)$ by $h_{-s}$ so that the slit of the resulting surface
$(M,h_{-s}\cdot\omega)$ is horizontal. Next transform $(M,h_{-s}\cdot\omega)$ by $g_{-t}$ so that the length of the horizontal slit for the resulting
surface $(M,(g_{-t}h_{-s})\cdot\omega)$ is $2R$. Therefore, $(M,(g_{-t}h_{-s})\cdot\omega)=M(\Lambda,R)$ for some $\Lambda\in\mathscr{L}$, so
$(M,\omega)=\Upsilon(\Lambda,s,t)$.

Since $\nu_{\mathcal{M}}$ is a smooth measure on $\mathcal{M}$ and $\Upsilon$ is a local diffeomorphism, the image by $\Upsilon$ of the product measure
$\mu_{\mathscr{L}}\otimes Leb_{\R^2}$ on $\mathscr{L}\times\R^2$ is equivalent to the measure $\nu_{\mathcal{M}}$.

The local product structure on $\mathcal{M}$ arising from $\Upsilon$ helps to deduce Theorem~\ref{thm:existxi} form Lemma~\ref{lem:existbound}. Indeed,
suppose, contrary to the claim of Theorem~\ref{thm:existxi}, that there exists a measurable subset $\mathscr{L}_0\subset \mathscr{L}$ of positive
$\mu_{\mathscr{L}}$-measure such that for every $\Lambda\in\mathscr{L}_0$ condition \eqref{asstion-thm} does not hold for the surface $M(\Lambda,R)$.
In view of \eqref{eq:changesigma}, condition \eqref{asstion-thm} does not hold for every surface $(h_sg_t)\cdot M(\Lambda,R)$ with $s,t\in\R$.
Consequently, for every surface from the set $\Upsilon(\mathscr{L}_0\times[0,1]^2)$ condition \eqref{asstion-thm} is not valid. Since the set has positive $\nu_{\mathcal{M}}$-measure, one gets a contradiction to Lemma~\ref{lem:existbound}.
\end{proof}

\begin{remark}\label{rem:saddle}
Note that for every translation surface $(M,\omega)$ for a.e.\ $\theta\in S^1$ the vertical flow on $(M,r_\theta\omega)$ has no saddle connection. It follows from Fubini theorem, that if $\nu$ is an $SL(2,\R)$-invariant measure on the moduli space $\mathcal{M}_1(M)$ then for $\nu$-a.e.\ $(M,\omega)$ the
vertical flow on $(M,\omega)$ has no saddle connection. Applying this observation to the measure $\nu_{\mathcal{M}}$ and then proceeding as in the proof of
Theorem~\ref{thm:existxi}, we obtain that for every $R>0$ and $\mu_{\mathscr{L}}$-a.e.\ lattice $\Lambda\in\mathscr{L}$ the surface $M(\Lambda,R)$ has no
saddle connections.
\end{remark}

\section{Periodic case  and examples}\label{sec:examples}
In this section we will show how to determine the direction of confining strips when
the surface $M(\Lambda,R)$ is a periodic point of the Teichm\"uller flow. Under additional
assumption (hyperbolicity of a certain matrix) we present a procedure that helps to construct
examples of pairs $\Lambda$, $R$ for which the direction of confining strips on $L(\Lambda,R)$
can be computed. We conclude this section with a specific example.
\smallskip

Suppose that $M(\Lambda,R)=(M,\omega_0)\in\mathcal{M}$ is a periodic
point of the Teichm\"uller flow with period $t_0>0$. Since
$g_{t_0}(M,\omega_0)=(M,\omega_0)$ in the moduli space, there exists an affine homeomorphism $\psi:(M,\omega_0)\to (M,\omega_0)$ such that $D\psi=g_{t_0}$.
Note that the group of affine homeomorphisms with derivative being the identity
consists of $id$ and $\tau$. Since $\psi\circ \tau$ is affine with $D(\psi\circ
\tau)=g_{t_0}$,  we have that either $\psi\circ \tau=\tau\circ\psi$ or $\psi\circ \tau=\psi$. As $\psi$ is one-to-one, the latter equality is not
satisfied, so $\psi\circ \tau=\tau\circ\psi$. Therefore $\psi_*\circ \tau_*=\tau_*\circ\psi_*$, hence $\psi_*$ preserves the subspaces $V$ and $V^\perp$.
Since $\psi_*$ restricted to $V\cap H_1(M,\Z)$ is an automorphism, any  matrix representation of $\psi_*:V\to V$ is an element of $SL(2,\Z)$. Suppose that
this element is hyperbolic. Then there exist eigenvectors $\xi_s,\xi_u\in V$  such that
\[\psi_*\xi_s=\lambda\xi_s\quad\text{ and }\quad\psi_*\xi_u=\lambda^{-1}\xi_u\quad\text{ with }|\lambda|<1.\]

\begin{lemma}\label{lem:pseudo}
There exists $C>0$ such that $|\langle\sigma_T^{\omega_0}(x),\xi_s\rangle|\leq C$  for all $x\in M^+_{\omega_0}$ and $T>0$.
\end{lemma}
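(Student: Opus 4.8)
The plan is to recognize Lemma~\ref{lem:pseudo} as the special case of Theorem~\ref{thm:existbound} obtained by running the machinery of Section~\ref{Teich:sec} over the closed Teichm\"uller geodesic through $(M,\omega_0)$. Let $\nu$ be the unique (hence ergodic) $(g_t)_{t\in\R}$-invariant probability measure supported on the periodic orbit $\mathcal{O}_{\omega_0}=\{g_t\cdot(M,\omega_0):0\le t<t_0\}$, i.e.\ normalised arc length along the orbit (after rescaling $\omega_0$ to unit area, if needed, which changes nothing essential), and put $\mathcal{M}_\nu=\mathcal{O}_{\omega_0}$. On this orbit the Kontsevich--Zorich cocycle is trivialised by the lift to Teichm\"uller space and reduces to the monodromy $\psi_*$ around the orbit; since $\psi\circ\tau=\tau\circ\psi$ we get $\psi_*\tau_*=\tau_*\psi_*$, so $\psi_*$ preserves $V$. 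Hence $V$ is a symplectic subspace (as already noted in Section~\ref{Teich:sec}) invariant for the action induced over $\mathcal{M}_\nu$, and Theorem~\ref{thm:existbound} applies to the pair $(\nu,V)$.

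Next I would identify the stable space $E^-_{\omega_0}\subset V$ with $\R\xi_s$. The key point is that $\psi$, being affine with $D\psi=g_{t_0}$, satisfies $\psi^*\omega_0=g_{t_0}\cdot\omega_0$, and therefore $(\psi^n)^*\omega_0=g_{nt_0}\cdot\omega_0$, so that $\psi^n\colon(M,g_{nt_0}\cdot\omega_0)\to(M,\omega_0)$ is an isomorphism of translation surfaces. Since Hodge norms are functorial under such isomorphisms, $\|\xi\|_{g_{nt_0}\cdot\omega_0}=\|\psi_*^n\xi\|_{\omega_0}$ for every $\xi\in H_1(M,\R)$. Taking $\xi=\xi_s$ and using $\psi_*\xi_s=\lambda\xi_s$ with $|\lambda|<1$ gives $\|\xi_s\|_{g_{nt_0}\cdot\omega_0}=|\lambda|^n\|\xi_s\|_{\omega_0}$; as the Hodge norm is bounded above and below along the compact orbit, this yields $\lim_{t\to\infty}\tfrac1t\log\|\xi_s\|_{g_t\cdot\omega_0}=\tfrac1{t_0}\log|\lambda|<0$, so $\xi_s\in E^-_{\omega_0}$. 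On the other hand, $\psi_*|_V$ hyperbolic forces $(G^{\mathcal{V}}_t)_{t\in\R}$ to have exactly one negative Lyapunov exponent, namely $\tfrac1{t_0}\log|\lambda|$, so $E^-_{\omega_0}$ is one-dimensional; therefore $E^-_{\omega_0}=\R\xi_s$.

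It then remains to transport the conclusion of Theorem~\ref{thm:existbound} from a $\nu$-generic point of the orbit to $\omega_0$ itself. Theorem~\ref{thm:existbound} provides, for some $t'\in[0,t_0)$, a constant $C'>0$ with $|\langle\sigma^{g_{t'}\cdot\omega_0}_T(x),\xi_s\rangle|\le C'\|\xi_s\|_{g_{t'}\cdot\omega_0}$ for all $x\in M^+_{g_{t'}\cdot\omega_0}$ and $T>0$, using that $\xi_s\in E^-_{g_{t'}\cdot\omega_0}=E^-_{\omega_0}$ (the cocycle being trivial along the geodesic segment $[0,t_0)$). Now $M^+_{g_{t'}\cdot\omega_0}=M^+_{\omega_0}$, since the two vertical flows differ only by a constant time change, and by \eqref{eq:changesigma} with $s=0$ one has $\sigma^{g_{t'}\cdot\omega_0}_T(x)=\sigma^{\omega_0}_{e^{t'}T}(x)$; letting $S=e^{t'}T$ range over $(0,\infty)$ gives $|\langle\sigma^{\omega_0}_S(x),\xi_s\rangle|\le C$ for all $x\in M^+_{\omega_0}$ and $S>0$, with $C:=C'\|\xi_s\|_{g_{t'}\cdot\omega_0}$, which is the assertion (and, via Theorem~\ref{thm:existband}, pins the confining direction down to $\bar{v}(\Lambda,\xi_s)$).

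I expect the main obstacle to be the second paragraph: correctly matching the \emph{dynamical} contraction of $\xi_s$ under $\psi_*$ with membership in the KZ-\emph{stable} subspace $E^-_{\omega_0}$, which hinges on the identity $(\psi^n)^*\omega_0=g_{nt_0}\cdot\omega_0$ together with the functoriality of the Hodge norm under translation isomorphisms. Once $E^-_{\omega_0}=\R\xi_s$ is established, the first and third paragraphs are routine, the only mild point being the passage from ``$\nu$-a.e.'' to the specific surface $\omega_0$, which is handled by the time-change relation \eqref{eq:changesigma}.
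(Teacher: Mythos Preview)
Your proof is correct and follows essentially the same route as the paper: restrict the KZ-cocycle to the closed Teichm\"uller geodesic with its arc-length measure, identify the stable subspace of $V$ with $\R\xi_s$ via the hyperbolicity of $\psi_*$, apply Theorem~\ref{thm:existbound}, and use \eqref{eq:changesigma} to pass from a $\nu$-generic point to $\omega_0$. The paper's proof simply asserts $E^-_\omega=\R\xi_s$ from the fact that the cocycle over the periodic orbit is ``completely determined by $\psi_*$'', whereas you supply the explicit justification through Hodge-norm functoriality; otherwise the arguments coincide.
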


\begin{proof}
Denote by $\mathcal{M}_0$ the Teichm\"uller flow orbit of $(M,\omega_0)$.  This is an invariant set with the unique probability invariant measure
$\nu_{\mathcal{M}_0}$ which is the image  of the Lebesgue measure on the circle $\R/t_0\Z$ via the map $\R/t_0\Z\ni t\mapsto
g_t(M,\omega_0)\in\mathcal{M}$. Let us consider the KZ-cocycle $(G_t^{\mathcal{V}})_{t\in\R}$ on $\mathcal{V}$ over the set $\mathcal{M}_0$. The cocycle is
completely determined by the map $\psi_*:V\to V$. Since it is hyperbolic we have
\[E^-_\omega=\R\xi_s\quad\text{ and }\quad E^+_\omega=\R\xi_u\quad\text{ for every }\quad\omega\in \mathcal{M}_0.\]
In view of Theorem~\ref{thm:existbound}, for almost every  $t\in\R/t_0\Z$ there exists  $C>0$ such that
$|\langle\sigma_T^{g_t\omega_0}(x),\xi_s\rangle|\leq C$ for all $x\in M$ and $T\in\R$. By \eqref{eq:changesigma}, this yields the assertion of the lemma.
\end{proof}

\medskip

Fix $R>0$ and consider a lattice $\Lambda\in\mathscr{L}$ satisfying \eqref{cond:adm}. Let us choose a positive basis $\gamma_+,\gamma_-$ of $\Lambda$
satisfying \eqref{eq:contint}. Then there exists a unique $g\in SL(2,\R)$ such that $g(\gamma_+)=(1,0)$ and $g(\gamma_-)=(0,1)$. Denote by $\vartheta\in
S^1$ the direction of the vector $g(0,1)$ and $u:=g(R,0)$. Then $u\in(-1/2,1/2)^2\setminus\{(0,0)\}$ and $u\wedge\vartheta>0$.

Let us consider the surface $g\cdot M(\Lambda,R)\in\mathcal{M}$.
This translation surface is the union of two copies of the
standard torus $\R^2/\Z^2$ with translation structure inherited
from the Euclidean plane (as a model torus we will use the square
$[-1/2,1/2)^2$) glued along the linear slit between $u$ and $-u$,
see Figure~\ref{surfend}. Such surface will be denoted by $M(u)$.
\begin{figure}[h]
\includegraphics[width=0.5\textwidth]{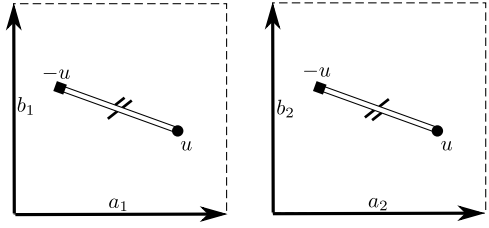}
\caption{Translation surface $M(u)$\label{surfend}}
\end{figure}

Note that the map
\begin{equation}\label{not:map}
(\Lambda,\gamma_+,\gamma_-)\mapsto (u,\vartheta)
\end{equation}
gives a one-to-one correspondence between translation surfaces
$M(\Lambda,R)$ with assigned oriented positive basis $\gamma_+,\gamma_-$
and translation surfaces $M(u)$ with assigned direction
$\vartheta$ such that $u\wedge\vartheta>0$. The
inverse of \eqref{not:map} is given by $\eta_{u,\vartheta}\in
SL(2,\R)$ such that $\eta_{u,\vartheta}u=(R,0)$ and
$\eta_{u,\vartheta}\vartheta=(0,(u\wedge\vartheta)/R)$. Then taking
$\gamma_+=\eta_{u,\vartheta}(1,0)$,
$\gamma_-=\eta_{u,\vartheta}(0,1)$ and
$\Lambda_{u,\vartheta}=\Z\gamma_++\Z\gamma_-$, we get
$M(\Lambda_{u,\vartheta},R)=\eta_{u,\vartheta}M(u)$.\\

By $\mathscr{U}\subset\mathcal{M}$ we denote the set of translation
surfaces $M(u)$ for which
\[u\in\T^2_0:=[-1/2,1/2)\times[-1/2,1/2)\setminus\{(0,0),(-1/2,-1/2),(-1/2,0),(0,-1/2)\}.\]
The set $\mathscr{U}$  is $SL(2,\Z)$-invariant
and  $g\cdot M(u)=M(g u)$, where $g u\in \T^2_0$ is the image of $u$ by
the algebraic automorphism $g:\T^2\to\T^2$ defined by the matrix
$g\in SL(2,\Z)$, see \cite{Fr-Ul:erg_bil}.

The $SL(2,\Z)$-action on $\mathscr{U}$ defines the induced
homology action on $H_1(M,\R)$ for which the subspace $V$ is
invariant, see also \cite{Fr-Ul:erg_bil}. For every translation
surface $M(u)$ we choose canonically the basis $\{e_1,e_2\}$ of
$V$, where $e_1:=a_1-a_2$ and $e_2=b_1-b_2$. Since
$\gamma_+=\eta_{u,\vartheta}(1,0)$ and
$\gamma_-=\eta_{u,\vartheta}(0,1)$, we have
$\gamma_1=(\eta_{u,\vartheta})_*e_1$ and
$\gamma_2=(\eta_{u,\vartheta})_*e_2$.

For every $g\in SL(2,\Z)$ and $u\in \T^2_0$ denote by $g_*(u)$ the
matrix representation in canonical bases of the homology map
induced by $g:M(u)\to M(gu)$. Since $g_*(u)$ is well defined up to
multiplication by $\pm 1$ (see \cite{Fr-Ul:erg_bil}), it is
treated as an element of $PSL(2,\Z)$.

\begin{theorem}\label{thm:eigen}
Let $u\in (-1/2,1/2)^2\setminus\{(0,0)\}$ be rational. Suppose that $h\in SL(2,\Z)$ is a hyperbolic matrix such that $hu=u$ (in $\T^2_0$) and $h_*(u)$ is also
hyperbolic. Denote by $\vartheta,\theta\in S^1$ the contracting eigendirections of $h$ and $h_*(u)$ respectively. Then all vertical light rays in
$F(\Lambda_{u,\vartheta},R)$ are trapped in  infinite bands in direction $\eta_{u,\vartheta}\theta$.
\end{theorem}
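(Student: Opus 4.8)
The plan is to reduce Theorem~\ref{thm:eigen} to the case already handled by Theorem~\ref{thm:existband} combined with the periodic-orbit version of bounded deviation given in Lemma~\ref{lem:pseudo}. First I would observe that since $u$ is rational, the translation surface $M(u)$ is a torus cover with rational slit endpoints, hence a square-tiled surface (a Veech surface), so that the stabilizer of $M(u)$ in $SL(2,\Z)$ acts by affine homeomorphisms. The hypothesis $hu=u$ in $\T^2_0$ guarantees that $h\in SL(2,\Z)$ descends to an affine homeomorphism of $M(u)$ fixing the surface in moduli space; since $h$ is hyperbolic, $M(u)$ is a periodic point of the Teichm\"uller flow $(g_t)_{t\in\R}$, with the period $t_0>0$ determined by the eigenvalue of $h$ (up to an orientation issue on which power one must take, but this does not affect the conclusion). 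The corresponding affine homeomorphism $\psi$ has $D\psi=g_{t_0}$ after conjugating so that the contracting eigendirection $\vartheta$ of $h$ is vertical.

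Next I would identify the homology data. The induced map $\psi_*$ on $V$ is, in the canonical basis $\{e_1,e_2\}$, exactly the matrix $h_*(u)\in PSL(2,\Z)$, which by hypothesis is hyperbolic. Let $\xi_s\in V$ be its contracting eigenvector; under the correspondence \eqref{not:map}, the direction $\theta$ — the contracting eigendirection of $h_*(u)$ in $S^1$ — is precisely the direction of $\xi_s$ read through the identification of $V$ with the plane. Lemma~\ref{lem:pseudo} then provides a constant $C>0$ with $|\langle\sigma_T^{\omega_0}(x),\xi_s\rangle|\leq C$ for all $x\in M^+_{\omega_0}$, $T>0$, where $M(u)=(M,\omega_0)$. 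Now I would transport everything by $\eta_{u,\vartheta}\in SL(2,\R)$: since $\eta_{u,\vartheta}M(u)=M(\Lambda_{u,\vartheta},R)$, and since applying an element of $SL(2,\R)$ only changes $\sigma$ by the reparametrization \eqref{eq:changesigma} (more precisely the homology class $\sigma^\omega_t(x)$ is unchanged as a homology class, only the time is rescaled, and $\langle\cdot,\cdot\rangle$ is $SL(2,\R)$-invariant), the bound $|\langle\sigma_t^{\omega}(x),\xi\rangle|\leq C$ persists with $\omega$ the differential of $M(\Lambda_{u,\vartheta},R)$ and $\xi=(\eta_{u,\vartheta})_*\xi_s\in\R\gamma_1+\R\gamma_2$. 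Here I use that $(\eta_{u,\vartheta})_*e_i=\gamma_i$, so $\xi$ indeed lies in the required subspace and is non-zero.

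Finally I would invoke Theorem~\ref{thm:existband}. Its hypotheses are: condition \eqref{cond:adm}, which holds since $\Lambda_{u,\vartheta}$ is $R$-admissible by construction of the correspondence (the slit $[-R,R]\times\{0\}$ sits in the interior of $P(\gamma_+,\gamma_-)$); the bounded-intersection property just established; and absence of a vertical saddle connection on $M(\Lambda_{u,\vartheta},R)$. The latter holds because the vertical direction on $M(\Lambda_{u,\vartheta},R)$ corresponds to the contracting eigendirection $\vartheta$ of the pseudo-Anosov $\psi$, and a pseudo-Anosov stable foliation on a torus cover with two branch points has no saddle connections (a vertical saddle connection would be a leaf between the two zeros whose length contracts geometrically under $\psi$, forcing it to lie in a periodic cylinder direction, contradicting hyperbolicity) — this is the one point where a short separate argument is needed, or alternatively one may replace $\psi$ by $\psi^2$ to ensure it fixes the zeros and rule out the exceptional configurations. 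Theorem~\ref{thm:existband} then yields that every vertical light ray in $F(\Lambda_{u,\vartheta},R)$ is trapped in an infinite band in direction $\bar{v}(\Lambda_{u,\vartheta},\xi)=\langle\gamma_2,\xi\rangle\gamma_+-\langle\gamma_1,\xi\rangle\gamma_-$, and a direct computation identifies this vector, up to positive scalar, with $\eta_{u,\vartheta}\theta$: indeed $\bar v(\Lambda_{u,\vartheta},\xi)=(\eta_{u,\vartheta})\bar v$ where $\bar v$ is the analogous vector built from $\xi_s$ and the standard basis, and $\bar v$ is proportional to $\xi_s$ viewed in the plane, i.e. to $\theta$.

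I expect the main obstacle to be the verification that the direction $\bar v(\Lambda_{u,\vartheta},\xi)$ really equals $\eta_{u,\vartheta}\theta$ rather than some other $\eta_{u,\vartheta}$-image of an eigendirection: one must carefully track how the intersection pairing on $V$, the symplectic basis $\{\gamma_1,\gamma_2\}$, the matrix $h_*(u)$, and the planar coordinates on $V$ induced by $\vartheta$ interact, since $\bar v(\Lambda,\xi)$ mixes the homological vector $\xi$ with the geometric vectors $\gamma_\pm$. The cleanest route is to check the identity for the model surface $M(u)$ with $\vartheta$ vertical (so $\eta_{u,\vartheta}=\operatorname{Id}$ up to the normalizing diagonal matrix), where $\bar v$ becomes literally the vector representing $\xi_s$ in the flat coordinates, hence parallel to $\theta$; then equivariance under $\eta_{u,\vartheta}$ finishes the general case. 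The saddle-connection-freeness, while true, also deserves an explicit sentence rather than being swept under the rug.
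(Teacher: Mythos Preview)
Your strategy matches the paper's: obtain a pseudo-Anosov from $h$, arrange for Lemma~\ref{lem:pseudo} to apply, read off the stable eigenvector of $h_*(u)$ in $V$, and feed the resulting bound into Theorem~\ref{thm:existband}. The final identification $\bar v(\Lambda,\xi)\parallel\eta_{u,\vartheta}\theta$ is in fact the one-line computation $\bar v(\Lambda,\theta_1\gamma_1+\theta_2\gamma_2)=-2(\theta_1\gamma_++\theta_2\gamma_-)=-2\eta_{u,\vartheta}\theta$, using only $\langle\gamma_1,\gamma_2\rangle=2$ and $\gamma_+=\eta_{u,\vartheta}(1,0)$, $\gamma_-=\eta_{u,\vartheta}(0,1)$; your worry there is unnecessary.

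There is, however, a genuine slip in the transport step. Lemma~\ref{lem:pseudo} bounds $\langle\sigma^{\omega_0}_T(x),\xi_s\rangle$ for the \emph{vertical} flow on a surface $(M,\omega_0)$ where $D\psi=g_{t_0}$, i.e.\ where the stable direction of the pseudo-Anosov is vertical. On $M(u)$ the stable direction is $\vartheta$, not vertical, so you cannot take $(M,\omega_0)=M(u)$ as you write. Your subsequent claim that under a general $g\in SL(2,\R)$ ``the homology class $\sigma^\omega_t(x)$ is unchanged, only the time is rescaled'' is false: this holds only for $g$ preserving the vertical direction (elements $h_sg_t$, cf.\ \eqref{eq:changesigma}), and $\eta_{u,\vartheta}$ does not. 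The paper repairs this by first passing to $M(\Lambda,R)=\eta_{u,\vartheta}M(u)$, where the vertical direction \emph{is} the stable eigendirection of the conjugated pseudo-Anosov; it then applies a further \emph{vertical-preserving} shear to make the derivative diagonal, invokes Lemma~\ref{lem:pseudo} on that surface, and transports the bound back by the inverse shear (which, being vertical-preserving, does send vertical orbit segments to vertical orbit segments). With this correction your argument goes through with $\xi=\theta_1\gamma_1+\theta_2\gamma_2$. Incidentally, your verification of the no-saddle-connection hypothesis is more careful than the paper's proof, which invokes Theorem~\ref{thm:existband} without addressing it.
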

\begin{proof}
Denote by $t_0>0$ the natural logarithm of the largest eigenvalue
of $h$. Since $hu=u$ in $\T^2_0$, there exists an affine automorphism
$\phi:M(u)\to M(u)$ such that $D\phi=h$. Let us consider the
affine automorphism $\eta_{u,\vartheta}\circ\phi\circ
\eta_{u,\vartheta}^{-1}:M(\Lambda,R)\to M(\Lambda,R)$ with
$\Lambda:=\Lambda_{u,\vartheta}$. Then
$D\big(\eta_{u,\vartheta}\circ\phi\circ
\eta_{u,\vartheta}^{-1}\big)=\eta_{u,\vartheta}\circ h\circ
\eta_{u,\vartheta}^{-1}$ and $(0,1)$ is its contracting eigenvector
for the eigenvalue $e^{-t_0}$. Take an expanding eigenvector of the
form $(1,r)$ for some $r\in\R$. Let us consider the surface
$(M,\omega_0)=h^T_{-r}M(\Lambda,R)$ ($h^T_{-r}$ is the transpose of $h_{-r}$) and its affine automorphism
\[\psi:=h^T_{-r}\circ \eta_{u,\vartheta}\circ\phi\circ
\eta_{u,\vartheta}^{-1}\circ h^T_r.\]
Then  $(0,1)$  is a contracting eigenvector of $D\psi$ and $(1,0)$ is an expanding one. Therefore, $D\psi=g_{t_0}$. It follows that $(M,\omega_0)$ is a periodic
element for the Teichm\"uller flow with period $t_0$. Since
$\psi$ is conjugate to $\phi$ by $h^T_{-r}\circ
\eta_{u,\vartheta}$ and the subspace $V\subset H_1(M,\R)$ is
$SL(2,\R)$-invariant, the induced automorphisms $\phi_*:V\to V$
and $\psi_*:V\to V$ are isomorphic via the map $(h^T_{-r}\circ
\eta_{u,\vartheta})_*:V\to V$. As $h_*(u)$ is the matrix
representation of $\phi_*:V\to V$ in the basis $e_1,e_2$ and $\theta=(\theta_1,\theta_2)$ is a contracting eigenvector for $h_*(u)$,
the homology class $\theta_1e_1+\theta_2e_2\in V$ is a contracting eigenvector for $\phi_*:V\to V$.
It follows that $\xi_s:=(h^T_{-r}\circ
\eta_{u,\vartheta})_*(\theta_1e_1+\theta_2e_2)\in V$ is a contracting eigenvector for
$\psi_*:V\to V$.
In view of Lemma~\ref{lem:pseudo}, there exists $C>0$ such that
\[|\langle\sigma_T^{\omega_0}(x),\xi_s\rangle|\leq C\text{  for all }x\in M^+_{\omega_0}\text{ and }T>0.\]
Let us consider the map $h^T_r:(M,\omega_0)\to M(\Lambda,R)$ and its induced transformation $(h^T_r)_*$ on homologies.
Since $(h^T_r)_*\sigma_T^{\omega_0}(x)=\sigma_T^{h^T_r\omega_0}(h^T_rx)$ and $(h^T_r)_*$ preserves
algebraic intersection number, we
also have \[|\langle\sigma_T^{h^T_r\omega_0}(x),(h^T_r)_*\xi_s\rangle|\leq C\text{
for all }x\in M^+_{h^T_r\omega_0}\text{ and }T>0\]
with $(M,h^T_r\omega_0)=M(\Lambda,R)$.

Since $(\eta_{u,\vartheta})_*$ sends the basis
$\{e_1,e_2\}$ to $\{\gamma_1,\gamma_2\}$, we have
$(h^T_r)_*\xi_s=\theta_1\gamma_1+\theta_2\gamma_2$. In view of
Theorem~\ref{thm:existband}, it follows that there exists
$\overline{C}>0$ such that every vertical orbit on $F(\Lambda,R)$
is trapped in an infinite band in the direction of the vector
\[\bar{v}(\Lambda,(h^T_r)_*\xi_s)=\langle\gamma_2,(h^T_r)_*\xi_s\rangle\gamma_+-\langle\gamma_1,(h^T_r)_*\xi_s\rangle\gamma_-=
-2(\theta_1\gamma_++\theta_2\gamma_-)=-2\eta_{u,\vartheta}\theta\]
and whose width is bounded by $\overline{C}$.
\end{proof}

Recall hat the matrices
$h^+:={\scriptsize\begin{pmatrix}1&1\\0&1\end{pmatrix}}$,
$h^-:={\scriptsize\begin{pmatrix}1&0\\1&1\end{pmatrix}}$ generate
$SL(2,\Z)$. Therefore, the following result allows us to calculate
$g_*(u)$ for every $g\in SL(2,\Z)$ and $u\in\T^2_0$. Then using
Theorem~\ref{thm:eigen}, one can construct an explicit example of
a lattice $\Lambda$ and $R>0$ such that vertical light rays in
$L(\Lambda,R)$ are trapped in strips, see Example~\ref{ex}.
\begin{proposition}[see \cite{Fr-Ul:erg_bil}]\label{lem:basic}
Set $S:=\{(x,y)\in \T^2_0:-1/2\leq x+y< 1/2\}$. For every
$u\in\T^2_0$ we have
\[h^{\pm}_*(u)=\left\{\begin{matrix}h^\pm & \text{ if } & u\in S,\\
(h^\pm)^{-1} & \text{ if } & u\notin S.
\end{matrix}\right.\]
\end{proposition}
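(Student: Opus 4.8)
The plan is to track explicitly how the homology basis $\{e_1,e_2\}=\{a_1-a_2,\,b_1-b_2\}$ of $V$ transforms when we apply the affine automorphism of $M(u)$ covering the linear automorphism $h^\pm\colon\T^2\to\T^2$, distinguishing the two cases according to whether the slit of $M(u)$ survives inside a fixed fundamental domain or is cut by its boundary. First I would fix once and for all the square model $[-1/2,1/2)^2$ for each of the two tori, with the slit joining $u$ to $-u$, and fix generating cycles $a_i,b_i$ ($i=1,2$) on the two sheets, chosen so that $p_*a_i$, $p_*b_i$ are the standard generators of $H_1(\T^2,\Z)$; this makes $V=\ker p_*$ two-dimensional with basis $e_1,e_2$, and $V^\perp$ spanned by $a_1+a_2$, $b_1+b_2$. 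The automorphism $g=h^\pm$ of $\T^2$ lifts to an automorphism $\widetilde g\colon M(u)\to M(gu)$ (the branch points $u_1=p^{-1}(u)$... actually $u_1,u_2$ are the two ramification points, and $g$ permutes or fixes them compatibly); since $g$ commutes with the deck involution $\tau$, the induced map $\widetilde g_*$ preserves $V$, and its matrix in $\{e_1,e_2\}$ is $g_*(u)\in PSL(2,\Z)$.

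The heart of the computation is that on the torus level $g=h^\pm$ acts on $H_1(\T^2)$ by the matrix $h^\pm$, so \emph{up to the sheet-exchange ambiguity} $\widetilde g_*$ acts on $V$ by $\pm h^\pm$; the content of the proposition is to pin down exactly when the sign (equivalently, the orientation/sheet bookkeeping) produces $h^\pm$ versus $(h^\pm)^{-1}$. I would make this concrete by choosing, for $M(u)$, an explicit cut system realizing $a_i,b_i$ as sides of the squares and the slit $[-u,u]$ as the branch cut, and then literally redrawing the image configuration $M(gu)$: applying $h^+$ (a horizontal shear) moves the endpoints $\pm u=(x,y)$ to $(x+y,y)$, and one must re-cut the sheared picture back to the standard square $[-1/2,1/2)^2$ to read off $gu$ in $\T^2_0$. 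The condition $u\in S$, i.e. $-1/2\le x+y<1/2$, is precisely the condition that $x+y$ already lies in the fundamental interval $[-1/2,1/2)$, so that no extra re-cutting (hence no extra sheet swap) is needed and $\widetilde g_*=h^+$ on $V$; when $u\notin S$ one has $x+y\ge 1/2$, the sheared slit crosses the vertical edge $\{x=1/2\}$, and bringing it back into the square forces the two sheets to be interchanged along part of the cut, which on $V$ (where $\tau_*=-\mathrm{id}$) has the effect of replacing the transformation by its inverse, giving $\widetilde g_*=(h^+)^{-1}$. The argument for $h^-$ is identical after exchanging the roles of the two coordinates (transposing), using the vertical shear and the edge $\{y=1/2\}$; note $S$ is symmetric under $(x,y)\mapsto(y,x)$, so the same set $S$ governs both cases.

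I expect the main obstacle to be the careful sign/orientation bookkeeping in the re-cutting step: keeping consistent orientations of the cycles $a_i,b_i$ across the two sheets and tracking how crossing the square's boundary permutes the sheets, so that one correctly obtains $(h^\pm)^{-1}$ rather than, say, $h^\pm$ with a coordinate swap. A clean way to organize this is to verify the claim first on a single well-chosen representative $u$ in each of the two regions $S$ and $\T^2_0\setminus S$ (for instance a small $u$ near the origin for the case $u\in S$, and a suitable $u$ for $u\notin S$), confirming both the value $gu$ and the matrix $h^\pm_*(u)$ by direct inspection of the surgered surfaces, and then to argue that the answer is locally constant on each region because the combinatorics of which edges the slit meets does not change as $u$ varies within a region. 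Since the statement is quoted from \cite{Fr-Ul:erg_bil}, I would also simply cite that reference for the detailed verification, reproducing here only the geometric picture above that makes the dichotomy $u\in S$ versus $u\notin S$ transparent.
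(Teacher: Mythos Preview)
The paper does not prove this proposition at all; it is stated with the attribution ``see \cite{Fr-Ul:erg_bil}'' and used as a black box. So there is no proof in the paper to compare against, and your final remark that one can simply cite that reference is in fact exactly what the authors do.

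That said, your sketch contains a genuine gap in the step where you explain why $u\notin S$ produces the \emph{inverse} matrix. You write that the re-cutting ``forces the two sheets to be interchanged along part of the cut, which on $V$ (where $\tau_*=-\mathrm{id}$) has the effect of replacing the transformation by its inverse''. But composing with a global sheet swap $\tau$ replaces the matrix $A$ by $-A$, and in $PSL(2,\Z)$ one has $-A=A$, not $A^{-1}$; so $\tau_*=-\mathrm{id}$ on $V$ cannot by itself account for the passage from $h^\pm$ to $(h^\pm)^{-1}$. The correct mechanism is more local: when $u\notin S$ the sheared image of the cycle $b_i$ picks up a horizontal cycle from the \emph{opposite} sheet, i.e.\ one finds $b_i\mapsto a_{3-i}+b_i$ rather than $b_i\mapsto a_i+b_i$, so that $e_2=b_1-b_2\mapsto -(a_1-a_2)+(b_1-b_2)=-e_1+e_2$, giving the matrix $(h^+)^{-1}$. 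This is what your cut-and-paste picture would eventually reveal, but the justification you give for it is incorrect.

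Your overall strategy (track $a_i,b_i$ through the shear and the renormalisation to the square, use that the combinatorics are locally constant in $u$, and check one representative in each region) is sound and is essentially how the computation is carried out in \cite{Fr-Ul:erg_bil}; you just need to replace the ``$\tau_*=-\mathrm{id}$ gives the inverse'' heuristic with the explicit tracking of which sheet the extra $a$-cycle lies on.
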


\begin{example}\label{ex}
Let $R=1/3$ and let us start from the point $u=(1/3,0)\in \T^2_0$.
Take
$h=(h^-)^3h^+={\scriptsize\begin{pmatrix}1&1\\3&4\end{pmatrix}}$.
Then $hu=u$, and using Proposition~\ref{lem:basic}, one can
compute that
$h_*(u)=h^-h^+={\scriptsize\begin{pmatrix}1&1\\1&2\end{pmatrix}}$.
Therefore, $\vartheta$ and $\theta$ are directions of contracting
eigenvectors $ \big(-\frac{3+\sqrt{21}}{6},1\big)$  and
$\big(-\frac{1+\sqrt{5}}{2},1\big)$ resp. Then
$\eta_{u,\vartheta}={\scriptsize\begin{pmatrix}1&(3+\sqrt{21})/6\\0&1\end{pmatrix}}$
and
\[\eta_{u,\vartheta}\Big(-\frac{1+\sqrt{5}}{2},1\Big)=\Big(\frac{\sqrt{21}-3\sqrt{5}}{6},1\Big).\]
It follows that for the lattice
\[\Lambda=\eta_{u,\vartheta}\Z^2=(1,0)\,\Z+\Big(\frac{3+\sqrt{21}}{6},1\Big)\,\Z\]
every vertical light ray in $L(\Lambda,1/3)$ is trapped in a band
with slope  $-\frac{\sqrt{21}+3\sqrt{5}}{4}$.
\end{example}
\subsection*{Acknowledgements}
The authors thank Jens Marklof for noticing the connection between
refractive properties of Eaton lens distributions and singular
complex planes (as studied in \cite{J-S}) attending a presentation
of the second author in Bristol in December 2012. His
observation led to this article. The authors also thank Corinna
Ulcigrai for useful discussions and the referees for valuable suggestions to
improve the presentation.


\begin{thebibliography}{99}



\bibitem{Bain} M.\ Bainbridge,  {\em Euler characteristics of Teichm\"uller curves in genus
two},  Geom.\ Topol.\  11 (2007), 1887-2073.


%
%
%
%

\bibitem{DHL} V.\ Delecroix, P.\ Hubert, S.\ Lelièvre, {\em Diffusion for the periodic wind-tree model},
 Ann.\ Sci.\ Éc.\ Norm.\ Supér.\ (4) 47 (2014), 28\ pgs.
%
%
%

\bibitem{For-coheq} G.\ Forni, {\em Solutions of the cohomological equation for
area-preserving flows on compact surfaces of higher genus}, Ann.\
of Math.\ (2) {146} (1997), 295-344.

\bibitem{For-dev} G.\ Forni, {\em Deviation of ergodic averages for area-preserving
flows on surfaces of higher genus}, Ann.\ of Math.\ (2) {155}
(2002), 1-103.



\bibitem{Fr-Ulc:nonerg}K.\ Fr\k{a}czek, C.\ Ulcigrai, {\em Non-ergodic $\Z$-periodic billiards
and infinite translation surfaces}, 
Invent.\ Math., DOI:10.1007/s00222-013-0482-z.

\bibitem{Fr-Ul:erg_bil}K.\ Fr\k{a}czek, C.\ Ulcigrai,
\emph{ Ergodic directions for billiards in a strip with periodically located obstacles}, Commun.\ Math.\ Phys.\ 327 (2014), 643-663.

\bibitem{Ful} W.\ Fulton, {\em Algebraic topology. A first course}, Graduate
Texts in Mathematics 153, Springer-Verlag, New York, 1995.

\bibitem{Ha-Ha}J.H.\ Hannaya, T.M.\ Haeusserab, \emph{Retroreflection by refraction}, J.\ Mod.\ Opt.\ 40
 (1993), 1437-1442.

\bibitem{Ho-We} P.\ Hooper, B.\ Weiss, {\em Generalized staircases: recurrence and
symmetry}, Ann.\ Inst.\ Fourier (Grenoble) 62 (2012), 1581-1600.


\bibitem{J-S} C. Johnson, M. Schmoll, {\em Pseudo-Anosov eigenfoliations on Panov planes},
Preprint 2013.



\bibitem{Ma} H.\ Masur, {\em Ergodic Theory of Translation Surfaces},
Handbook of dynamical systems, Vol.\ 1B,  Elsevier B.\ V.,
Amsterdam (2006), 527-547.



\bibitem{ViB}M.\ Viana, {\em Dynamics
of Interval Exchange Transformations and Teichm\"uller Flows},
lecture notes available from
\url{http://w3.impa.br/~viana/out/ietf.pdf}


\bibitem{YoLN}J.-C.\ Yoccoz, {\em  Interval exchange maps and translation surfaces}.
Homogeneous flows, moduli spaces and arithmetic, 1-69, Clay Math. Proc., 10, Amer. Math. Soc., Providence, RI, 2010.


\bibitem{Zor1} A.\ Zorich, {\em Deviation for interval exchange transformations},
Ergodic Theory Dynam.\ Systems  {17}  (1997), 1477-1499.

\bibitem{Zor2} A.\ Zorich, {\em How do the leaves of a closed 1-form wind around a surface?},
 Amer.\ Math.\ Soc.\ Transl.\ Ser.\ 2, 197 {197}  (1999), 135-178.

\bibitem{ZoFlat} A.\ Zorich, {\em Flat surfaces},
Frontiers in number theory, physics, and geometry. I, Springer,
Berlin  (2006), 437-583.
\end{thebibliography}
\end{document}